\newtheorem{theorem}{Theorem}[section]
\newtheorem{lemma}[theorem]{Lemma}
\newtheorem{corollary}[theorem]{Corollary}
\newtheorem{question}[theorem]{Question}
\newtheorem{example}[theorem]{Example}
\theoremstyle{definition}
\newtheorem{definition}[theorem]{Definition}
\newtheorem{proposition}[theorem]{Proposition}
\theoremstyle{remark}
\begin{document}

\title[Feathered gyrogroups and gyrogroups with countable pseudocharacter]
{Feathered gyrogroups and gyrogroups with countable pseudocharacter}

\author{Meng Bao}
\address{(Meng Bao): School of mathematics and statistics,
Minnan Normal University, Zhangzhou 363000, P. R. China}
\email{mengbao95213@163.com}

\author{Fucai Lin*}
\address{(Fucai Lin): School of mathematics and statistics,
Minnan Normal University, Zhangzhou 363000, P. R. China}
\email{linfucai2008@aliyun.com; linfucai@mnnu.edu.cn}

\thanks{The authors are supported by the NSFC (No. 11571158), the Natural Science Foundation of Fujian Province (No. 2017J01405) of China, the Program for New Century Excellent Talents in Fujian Province University, the Institute of Meteorological Big Data-Digital Fujian and Fujian Key Laboratory of Data Science and Statistics.\\
*corresponding author}

\keywords{topological gyrogroups; quotient topology; feathered gyrogroup; paracompact; NSS-gyrogroup; $P$-gyrogroup; Ra$\check{\imath}$kov complete.}
\subjclass[2018]{Primary 54A20; secondary 11B05; 26A03; 40A05; 40A30; 40A99.}

\begin{abstract}
Topological gyrogroups, with a weaker algebraic structure than groups, have been investigated recently. In this paper, we prove that every feathered strongly topological gyrogroup is paracompact, which implies that every feathered strongly topological gyrogroup is a $D$-space and gives partial answers to two questions posed by A.V.Arhangel' ski\v\i ~(2010) in \cite{AA1}. Moreover, we prove that every locally compact $NSS$-gyrogroup is first-countable. Finally, we prove that each Lindel\"{o}f $P$-gyrogroup is Ra$\check{\imath}$kov complete.
\end{abstract}

\maketitle

\section{Introduction and Preliminaries}
Recently, W. Atiponrat \cite{AW} has defined the concept of topological gyrogroups as a generalization of topological groups and discussed some properties of them. In \cite{CZ}, the authors proved that each topological gyrogroup is a rectifiable space. Therefore, M\"{o}bius gyrogroup and Einstein gyrogroups are all rectifiable spaces, see \cite{AW}. A gyrogroup is a relaxation of a group such that the associativity condition has been replaced by a weaker one, which has become an interesting topic in algebra, see \cite{FM,FM1,SL,ST,ST1}. Indeed, this concept of a gyrogroup is originated from the research of $c$-ball of relativistically admissible velocities with Einstein velocity addition as mentioned by A.A. Ungar in \cite{UA}.

The purpose of this paper is to investigate some properties of topological gyrogroups and the coset space of topological gyrogroups. In particular, we prove that every feathered strongly topological gyrogroup is paracompact, every locally compact $NSS$-gyrogroup $G$ is first-countable and each Lindel\"{o}f $P$-gyrogroup $G$ is Ra$\check{\imath}$kov complete. These results extend some well known results of topological groups.

Throughout this paper, all topological spaces are assumed to be
$T_{1}$, unless otherwise is explicitly stated. Let $\mathbb{N}$ be the set of all positive integers and $\omega$ the first infinite ordinal. The readers may consult \cite{AA, E, linbook} for notations and terminologies not explicitly given here. Next we recall some definitions and facts.

\begin{definition}\cite{AW}
Let $G$ be a nonempty set, and let $\oplus: G\times G\rightarrow G$ be a binary operation on $G$. Then the pair $(G, \oplus)$ is called a {\it groupoid}. A function $f$ from a groupoid $(G_{1}, \oplus_{1})$ to a groupoid $(G_{2}, \oplus_{2})$ is called a {\it groupoid homomorphism} if $f(x\oplus_{1}y)=f(x)\oplus_{2} f(y)$ for any elements $x, y\in G_{1}$. Furthermore, a bijective groupoid homomorphism from a groupoid $(G, \oplus)$ to itself will be called a {\it groupoid automorphism}. We write $\mbox{Aut}(G, \oplus)$ for the set of all automorphisms of a groupoid $(G, \oplus)$.
\end{definition}

\begin{definition}\cite{UA}
Let $(G, \oplus)$ be a groupoid. The system $(G,\oplus)$ is called a {\it gyrogroup}, if its binary operation satisfies the following conditions:

\smallskip
$(G1)$ There exists a unique identity element $0\in G$ such that $0\oplus a=a=a\oplus0$ for all $a\in G$.

\smallskip
$(G2)$ For each $x\in G$, there exists a unique inverse element $\ominus x\in G$ such that $\ominus x \oplus x=0=x\oplus (\ominus x)$.

\smallskip
$(G3)$ For all $x, y\in G$, there exists $gyr[x, y]\in \mbox{Aut}(G, \oplus)$ with the property that $x\oplus (y\oplus z)=(x\oplus y)\oplus gyr[x, y](z)$ for all $z\in G$.

\smallskip
$(G4)$ For any $x, y\in G$, $gyr[x\oplus y, y]=gyr[x, y]$.
\end{definition}

Notice that a group is a gyrogroup $(G,\oplus)$ such that $gyr[x,y]$ is the identity function for all $x, y\in G$. The definition of a subgyrogroup is as follows.

\begin{definition}\cite{ST}
Let $(G,\oplus)$ be a gyrogroup. A nonempty subset $H$ of $G$ is called a {\it subgyrogroup}, denoted
by $H\leq G$, if the following statements hold:

\smallskip
$(i)$ The restriction $\oplus| _{H\times H}$ is a binary operation on $H$, i.e. $(H, \oplus| _{H\times H})$ is a groupoid.

\smallskip
$(ii)$ For any $x, y\in H$, the restriction of $gyr[x, y]$ to $H$, $gyr[x, y]|_{H}$ : $H\rightarrow gyr[x, y](H)$, is a bijective homomorphism.

\smallskip
$(iii)$ $ (H, \oplus|_{H\times H})$ is a gyrogroup.

\smallskip
Furthermore, a subgyrogroup $H$ of $G$ is said to be an {\it $L$-subgyrogroup} \cite{ST}, denoted
by $H\leq_{L} G$, if $gyr[a, h](H)=H$ for all $a\in G$ and $h\in H$.

\end{definition}

\begin{proposition}\cite{ST}.
Let $(G, \oplus)$ be a gyrogroup, and let $H$ be a nonempty subset of $G$. Then $H$ is a subgyrogroup if and only if the following statements are true:

\smallskip
1. For any $x\in H$, $\ominus x\in H$.

\smallskip
2. For any $x, y\in H$, $x\oplus y\in H$.
\end{proposition}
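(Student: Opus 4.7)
The forward direction is immediate: if $H$ is a subgyrogroup then by clause (iii) of the definition, $(H,\oplus|_{H\times H})$ is itself a gyrogroup, hence contains inverses and is closed under $\oplus$.

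For the converse, assume $H$ is closed under $\ominus$ and $\oplus$. The main tool I would establish first is the explicit formula
\[
gyr[x,y](z) \;=\; \ominus(x\oplus y)\oplus\bigl(x\oplus(y\oplus z)\bigr), \qquad x,y,z\in G,
\]
which follows by applying left cancellation (a short consequence of (G1)--(G3), namely $\ominus a\oplus(a\oplus b)=b$) to both sides of the gyroassociative law (G3). Because $H$ is closed under $\oplus$ and $\ominus$, this formula immediately yields $gyr[x,y](H)\subseteq H$ whenever $x,y\in H$.

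With this in hand I would verify the three clauses of the subgyrogroup definition in turn. Clause (i) is just hypothesis 2. For (ii), $gyr[x,y]|_H$ is a homomorphism (inherited from $gyr[x,y]\in\mathrm{Aut}(G,\oplus)$), injective for the same reason, and surjective onto the image $gyr[x,y](H)$ by construction. For (iii), the identity $0=x\oplus\ominus x$ lies in $H$ by closure, inverses are supplied by hypothesis 1, and both (G3) and (G4) descend from the ambient gyrogroup once one knows that $gyr[x,y]$ maps $H$ into $H$.

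The one subtlety, which I would flag as the main (minor) obstacle, is that for $(H,\oplus|_{H\times H})$ to be a gyrogroup in its own right the map $gyr[x,y]$ must be an automorphism of $H$, hence surjective onto $H$ rather than merely into it. I would handle this by inverting the displayed formula: given $w\in H$, the unique $z\in G$ with $gyr[x,y](z)=w$ satisfies $z=\ominus y\oplus\bigl(\ominus x\oplus((x\oplus y)\oplus w)\bigr)$, which again lies in $H$ by the two closure hypotheses. Thus $H\subseteq gyr[x,y](H)$, equality holds, and $gyr[x,y]|_H$ is a genuine automorphism of $(H,\oplus|_{H\times H})$, completing the verification of (iii).
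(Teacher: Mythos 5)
Your proof is correct. The paper does not actually prove this proposition (it is quoted from \cite{ST}), but your argument is the standard one: establish $gyr[x,y](z)=\ominus(x\oplus y)\oplus(x\oplus(y\oplus z))$ (this is item 4 of Lemma~\ref{a} in the paper) and use closure under $\oplus$ and $\ominus$ to conclude $gyr[x,y](H)=H$, and your explicit inversion formula $z=\ominus y\oplus\bigl(\ominus x\oplus((x\oplus y)\oplus w)\bigr)$ correctly settles the one delicate point, surjectivity onto $H$. The only gloss is in the forward direction: one should note (via a one-line left-cancellation argument) that the identity of the gyrogroup $(H,\oplus|_{H\times H})$ coincides with $0$ and its inverses coincide with the $G$-inverses, so that clause (iii) really gives $\ominus x\in H$ in the sense of $G$.
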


\begin{lemma}\cite{UA}\label{a}.
Let $(G, \oplus)$ be a gyrogroup. Then for any $x, y, z\in G$, we obtain the following:

\smallskip
1. $(\ominus x)\oplus (x\oplus y)=y$.

\smallskip
2. $(x\oplus (\ominus y))\oplus gyr[x, \ominus y](y)=x$.

\smallskip
3. $(x\oplus gyr[x, y](\ominus y))\oplus y=x$.

\smallskip
4. $gyr[x, y](z)=\ominus (x\oplus y)\oplus (x\oplus (y\oplus z))$.
\end{lemma}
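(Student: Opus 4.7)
The plan is to derive the four identities one at a time from axioms (G1)--(G4), using earlier identities in the list as leverage for later ones.

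Item 2 should be the easiest: in (G3), substitute $y\mapsto\ominus y$ and $z\mapsto y$ to obtain
\[
x\oplus\bigl((\ominus y)\oplus y\bigr)=(x\oplus(\ominus y))\oplus gyr[x,\ominus y](y),
\]
and the left-hand side collapses to $x$ by (G2) and (G1). Item 4 will then be an immediate rewrite of (G3): once item 1 is available, applying it to $(\ominus(x\oplus y))\oplus\bigl((x\oplus y)\oplus gyr[x,y](z)\bigr)=gyr[x,y](z)$ and invoking (G3) on the inner sum $x\oplus(y\oplus z)$ yields exactly the stated formula. So items 2 and 4 are essentially a bookkeeping exercise once item 1 is in hand.

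The real work is item 1, the left cancellation law. My plan is to first show $gyr[0,y]=\mathrm{id}$: plugging $x=0$ into (G3) gives $0\oplus(y\oplus z)=(0\oplus y)\oplus gyr[0,y](z)$, i.e.\ $y\oplus z=y\oplus gyr[0,y](z)$, and one then argues (via (G3) and the automorphism property of $gyr[0,y]$) that the automorphism must be the identity. Next, apply (G4) with $x$ replaced by $\ominus x$ to conclude
\[
gyr[\ominus x,x]=gyr[(\ominus x)\oplus x,x]=gyr[0,x]=\mathrm{id}.
\]
Feeding this into (G3) gives $(\ominus x)\oplus(x\oplus y)=((\ominus x)\oplus x)\oplus gyr[\ominus x,x](y)=y$, which is item 1. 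The main obstacle here is the circularity that threatens the first step: one must extract $gyr[0,y]=\mathrm{id}$ without already assuming cancellation, and the cleanest route is to use the automorphism axiom together with uniqueness of identity in (G1).

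Item 3 is the subtlest of the four. A direct substitution in (G3) yields the neighbouring identity $(x\oplus y)\oplus gyr[x,y](\ominus y)=x$, but the placement of $y$ on the outside differs from item 3. To bridge the gap, I plan to first derive the right gyroassociative law $(a\oplus b)\oplus c=a\oplus\bigl(b\oplus gyr[b,a](c)\bigr)$ from (G3) and (G4), then apply it with $a=x$, $b=gyr[x,y](\ominus y)$, $c=y$ and reduce the resulting gyration using the loop property (G4) to check that the extra gyration becomes inverse to $gyr[x,y]$. The potential difficulty is verifying the required gyration identity $gyr\bigl[gyr[x,y](\ominus y),x\bigr]=gyr[x,y]^{-1}\cdot gyr[\ominus y,x]$ or an analogous relation, which is where the non-associative nature of $\oplus$ really bites; I expect this to be the step that requires the most care.
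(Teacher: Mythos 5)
The paper itself offers no proof of this lemma---it is quoted verbatim from Ungar's book \cite{UA}---so your outline has to stand on its own. Items 2 and 4 are fine as planned, and the circularity you flag in item 1 is real but easily closed, essentially by the tool you name: from $y\oplus z=y\oplus gyr[0,y](z)$ apply $\ominus y\oplus(\cdot)$ to both sides and use (G3) to get $gyr[\ominus y,y](z)=gyr[\ominus y,y](gyr[0,y](z))$; injectivity of the automorphism $gyr[\ominus y,y]$ then yields $gyr[0,y]=\mathrm{id}$ (uniqueness of the identity in (G1) plays no role here), and your loop-property step $gyr[\ominus x,x]=gyr[0,x]$ finishes item 1.

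The genuine gap is item 3, and it is twofold. First, the right gyroassociative law $(a\oplus b)\oplus c=a\oplus\bigl(b\oplus gyr[b,a](c)\bigr)$ is not a direct consequence of (G3) and (G4) as you suggest: it is equivalent to the gyroautomorphism inversion law $gyr[a,b]^{-1}=gyr[b,a]$, itself a nontrivial derived identity that your proposal never establishes. Second, even granting it, your substitution $a=x$, $b=gyr[x,y](\ominus y)$, $c=y$ reduces item 3 to the nested-gyration identity $gyr\bigl[gyr[x,y](\ominus y),x\bigr](y)=gyr[x,y](y)$ (equivalently $gyr\bigl[x,gyr[x,y](\ominus y)\bigr]\bigl(gyr[x,y](y)\bigr)=y$), which is not the relation you guess and which you explicitly leave unverified; proving it is essentially as hard as item 3 itself, so at present item 3 is not proved. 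A cleaner standard route: once item 1 is available, (G3) and the left loop property (G4) show that any $t$ with $t\oplus y=x$ must satisfy $t=t\oplus\bigl(y\oplus(\ominus y)\bigr)=(t\oplus y)\oplus gyr[t,y](\ominus y)=x\oplus gyr[t\oplus y,y](\ominus y)=x\oplus gyr[x,y](\ominus y)$, so item 3 amounts to solvability of $t\oplus y=x$; that solvability is Ungar's theorem on the equation $x\oplus a=b$, and its verification is exactly where the work you deferred (the inversion law or a nested gyration identity) has to be done. In summary, your plan covers items 1, 2 and 4, but item 3 still lacks the key step.
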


\begin{definition}\cite{AW}
A triple $(G, \tau, \oplus)$ is called a {\it topological gyrogroup} if the following statements hold:

\smallskip
1. $(G, \tau)$ is a topological space.

\smallskip
2. $(G, \oplus)$ is a gyrogroup.

\smallskip
3. The binary operation $\oplus: G\times G\rightarrow G$ is jointly continuous while $G\times G$ is endowed with the product topology, and the operation of taking the inverse $\ominus (\cdot): G\rightarrow G$, i.e. $x\rightarrow \ominus x$, is also continuous.
\end{definition}

\begin{definition}\cite{ST1}
Let $(G, \tau, \oplus)$ be a topological gyrogroup, and let $A, B$ be subsets of $G$. We will define $A\oplus B$ and $\ominus A$ as follows:

$A\oplus B= \{a\oplus b: a\in A, b\in B\}$ and
$\ominus A= \{\ominus a: a\in A\}=(\ominus)^{-1}(A)$.
\end{definition}

\begin{definition}\cite{UA}
Let $(G,\oplus)$ be a gyrogroup, and let $x\in G$. We define the {\it left gyrotranslation} by $x$ to be the function $L_{x} : G\rightarrow G$ such that $L_{x} (y)=x\oplus y$ for any $y\in G$. In addition, the {\it right gyrotranslation} by $x$ is defined to be the function $R_{x} (y)=y\oplus x$ for any $y\in G$.
\end{definition}

\begin{definition}\cite{E}
A topological space $X$ is {\it paracompact} if every open covering of $X$ can be refined by a locally finite open covering.
\end{definition}

\begin{definition}\cite{E}
A continuous mapping $f:X\rightarrow Y$ is called {\it closed} (resp. {\it open}) if for every closed (resp. open) set $A\subset X$ the image $f(A)$ is closed (open) in $Y$.
\end{definition}

\begin{definition}\cite{E}
A continuous mapping $f:X\rightarrow Y$ is {\it perfect} if $f$ is a closed mapping and all fibers $f^{-1}(y)$ are compact subsets of $X$.
\end{definition}

\begin{definition}\cite{E}
A family $\gamma$ of open sets in a space $X$ is called a base for $X$ at a set $F\subset X$ if all elements of $\gamma$ contains $F$ and, for each open set $V$ that contains $F$, there exists $U\in\gamma$ such that $U\subset V$. The character of $X$ at a set $F$ is the smallest cardinality of a base for $X$ at $F$.
\end{definition}

\section{Basic Properties of topological gyrogroups}
In this section, we study some basic properties of topological gyrogroups. First, we recall two lemmas.

\begin{lemma}\cite{AW}\label{n}.
Let $(G,\tau ,\oplus)$ be a topological gyrogroup, and let $x\in G$. Then $$\ominus (\cdot) : (G,\tau)\rightarrow (G,\tau), L_{x} : (G,\tau)\rightarrow (G,\tau)\ \mbox{and}\ R_{x}: (G,\tau)\rightarrow (G,\tau)$$ are all homeomorphisms.
\end{lemma}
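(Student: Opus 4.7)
The plan is to verify, for each of the three maps, that it is a continuous bijection whose inverse is also continuous, leveraging the algebraic identities in Lemma \ref{a}. Continuity is immediate from the definition of a topological gyrogroup: $\ominus(\cdot)$ is continuous by hypothesis, while $L_x$ and $R_x$ are obtained by fixing one coordinate of the jointly continuous operation $\oplus$. So the real task is to exhibit a continuous two-sided inverse in each case.

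For $\ominus(\cdot)$, axiom $(G2)$ gives uniqueness of the inverse, which forces $\ominus(\ominus x)=x$, so $\ominus(\cdot)$ is an involution and hence its own continuous inverse. For $L_x$, part (1) of Lemma \ref{a} reads $L_{\ominus x}\circ L_x=\mathrm{id}_G$; the symmetric equality $L_x\circ L_{\ominus x}=\mathrm{id}_G$ follows on applying the same identity with $x$ replaced by $\ominus x$ and using $\ominus(\ominus x)=x$. Since $L_{\ominus x}$ is itself a left gyrotranslation, it is continuous, and so $L_x$ is a homeomorphism.

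The only delicate case is $R_x$, because $R_{\ominus x}$ is in general \emph{not} an inverse for $R_x$. I would take part (3) of Lemma \ref{a} and, after relabeling, read it as $(z\oplus gyr[z,x](\ominus x))\oplus x=z$; this suggests defining $\varphi(z)=z\oplus gyr[z,x](\ominus x)$, whereupon the identity is precisely $R_x\circ\varphi=\mathrm{id}_G$. Injectivity of $R_x$ (deducible from the same identity applied to the hypothesis $R_x(y_1)=R_x(y_2)$) promotes this to a genuine two-sided inverse $\varphi=R_x^{-1}$. The main obstacle is the continuity of $\varphi$, which reduces to showing that the map $(z,x)\mapsto gyr[z,x](\ominus x)$ is continuous; this is where part (4) of Lemma \ref{a} does the real work, by writing $gyr[u,v](w)=\ominus(u\oplus v)\oplus(u\oplus(v\oplus w))$ as a composition of the continuous operations $\oplus$ and $\ominus$. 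Hence $\varphi$ is continuous and $R_x$ is a homeomorphism.
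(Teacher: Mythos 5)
The paper itself offers no proof of this lemma (it is quoted from \cite{AW}), so your argument can only be measured against the standard one, and in outline it matches it: continuity from the definition, $\ominus$ an involution, $L_{\ominus x}$ the inverse of $L_x$, and for $R_x$ the coaddition-type map $\varphi(z)=z\oplus gyr[z,x](\ominus x)$ as inverse, with its continuity obtained by expanding the gyration through Lemma \ref{a}(4). The treatment of $\ominus(\cdot)$ and $L_x$ is complete and correct, and identifying $\varphi$ together with the continuity argument via (4) is exactly the right move for $R_x$.

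The one genuine soft spot is your parenthetical claim that injectivity of $R_x$ is ``deducible from the same identity applied to the hypothesis $R_x(y_1)=R_x(y_2)$.'' Applying Lemma \ref{a}(3) to $y_1$ and $y_2$ only yields $y_i=(y_i\oplus gyr[y_i,x](\ominus x))\oplus x$, and since the gyrations $gyr[y_1,x]$ and $gyr[y_2,x]$ depend on the unknown points, the hypothesis $y_1\oplus x=y_2\oplus x$ cannot be fed into these identities to cancel anything; identity (3) by itself only gives $R_x\circ\varphi=\mathrm{id}$, i.e.\ surjectivity of $R_x$ and injectivity of $\varphi$. The missing ingredient is the left loop property (G4). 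The cleanest repair is to check directly that $\varphi$ is also a left inverse: for any $y$, (G4) gives $gyr[y\oplus x,x]=gyr[y,x]$, hence $\varphi(R_x(y))=(y\oplus x)\oplus gyr[y,x](\ominus x)$, and Lemma \ref{a}(2) with $y$ replaced by $\ominus x$ (using $\ominus(\ominus x)=x$) says precisely that this equals $y$. Equivalently, you may invoke the uniqueness of the solution of $y\oplus x=b$ (Ungar), but its proof likewise uses (G3) together with (G4), so the loop property cannot be avoided. With that substitution the proof is complete; everything else in your write-up stands.
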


\begin{lemma}\cite{AW}\label{m}
Let $(G, \tau, \oplus)$ be a topological gyrogroup, and let $U$ be a neighborhood of the identity $0$. Then the following three statements hold:
\begin{enumerate}
\item There is an open symmetric neighborhood $V$ of $0$ in $G$ such that $V\subset U$ and $V\oplus V\subset U$.

\smallskip
\item There is an open neighborhood $V$ of $0$ such that $\ominus V\subset U$.

\smallskip
\item If $A$ is a subset of $G$, $\overline{A}\subset W\oplus A$ for any neighborhood $W$ of the identity $0$.
\end{enumerate}
\end{lemma}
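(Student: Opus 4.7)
The plan is to handle the three parts separately, leveraging joint continuity of $\oplus$ at $(0,0)$, continuity of $\ominus$ at $0$, and the homeomorphism properties of translations recorded in Lemma~\ref{n}. Part~(2) is essentially a one-liner: since $\ominus 0 = 0 \in U$ and $\ominus$ is continuous at $0$, there exists an open neighborhood $V$ of $0$ with $\ominus V \subset U$. For Part~(1), I would apply joint continuity of $\oplus$ at $(0,0)$ to $0 \oplus 0 = 0 \in U$ to obtain open neighborhoods $V_1, V_2$ of $0$ with $V_1 \oplus V_2 \subset U$; setting $W = V_1 \cap V_2$ yields $W \oplus W \subset U$. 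To enforce symmetry, I would define $V = W \cap (\ominus W)$. Since $\ominus$ is a homeomorphism (Lemma~\ref{n}), $\ominus W$ is open, so $V$ is an open neighborhood of $0$ with $\ominus V = V$, $V \subset U$, and $V \oplus V \subset W \oplus W \subset U$.

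For Part~(3), fix $x \in \overline{A}$; the goal is to produce $w \in W$ and $a \in A$ with $x = w \oplus a$. Since $\ominus$ is a homeomorphism, $\ominus W$ is a neighborhood of $0$, and since $R_x : y \mapsto y \oplus x$ is a homeomorphism (Lemma~\ref{n}), $(\ominus W) \oplus x = R_x(\ominus W)$ is a neighborhood of $R_x(0) = x$. Because $x \in \overline{A}$, this neighborhood meets $A$, giving $w \in W$ and $a \in A$ with $a = (\ominus w) \oplus x$. It then remains only to left-cancel to recover $x = w \oplus a$.

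The main obstacle I anticipate lies precisely in this last cancellation. In a group the equality $w \oplus ((\ominus w) \oplus x) = x$ would be immediate, but in a gyrogroup one must be careful, since associativity is distorted by gyroautomorphisms. The trick is to invoke Lemma~\ref{a}(1) with its ``$x$'' taken as $\ominus w$ and its ``$y$'' as $x$: using $\ominus(\ominus w) = w$, this identity reduces to $w \oplus ((\ominus w) \oplus x) = x$ with no gyroautomorphism correction. Once that cancellation is in hand, the argument becomes a faithful transcription of the standard topological-group proof, with right translation by $x$ playing its usual role in producing the desired neighborhood of $x$.
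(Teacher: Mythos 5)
Your argument is correct: parts (1) and (2) follow exactly as you say from continuity of $\oplus$ and $\ominus$ at the identity together with $\ominus(\ominus x)=x$, and in part (3) your use of Lemma~\ref{a}(1) with $\ominus w$ in place of $x$ is precisely the left-cancellation needed to turn $a=(\ominus w)\oplus x$ into $x=w\oplus a\in W\oplus A$. The paper itself gives no proof (the lemma is quoted from \cite{AW}), and your argument is the same standard one used there, so there is nothing to add.
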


\begin{proposition}\label{p}\cite{AW}
Let $(G,\tau ,\oplus)$ be a topological gyrogroup, and $H$ a subgyrogroup of $G$. Then the closure of $H$ in $G$ is also a subgyrogroup of $G$.
\end{proposition}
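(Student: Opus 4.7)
\smallskip

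My plan is to verify the two closure conditions in Proposition~1.4 for the set $\overline{H}$, namely that $\overline{H}$ is closed under $\ominus$ and under $\oplus$. Both verifications are standard continuity arguments, mirroring the topological group case, and rely only on the joint continuity of $\oplus$ (from the definition of a topological gyrogroup) and the fact that $\ominus(\cdot)$ is a homeomorphism (Lemma~\ref{n}).

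\smallskip

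First I would handle the inverse condition. Since $H\le G$, Proposition~1.4 gives $\ominus H\subseteq H$. Because $\ominus(\cdot):G\to G$ is a homeomorphism by Lemma~\ref{n}, it preserves closures, so
\[
\ominus\overline{H}=\overline{\ominus H}\subseteq\overline{H}.
\]
Thus $\ominus x\in\overline{H}$ whenever $x\in\overline{H}$.

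\smallskip

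Next I would handle closure under $\oplus$. Fix $x,y\in\overline{H}$ and let $U$ be an arbitrary open neighborhood of $x\oplus y$. By joint continuity of $\oplus$ at $(x,y)$, there exist open neighborhoods $V$ of $x$ and $W$ of $y$ with $V\oplus W\subseteq U$. Because $x\in\overline{H}$ we can pick $h_1\in V\cap H$, and because $y\in\overline{H}$ we can pick $h_2\in W\cap H$. Applying Proposition~1.4 to $H$ gives $h_1\oplus h_2\in H$, while by construction $h_1\oplus h_2\in V\oplus W\subseteq U$. Hence every neighborhood of $x\oplus y$ meets $H$, so $x\oplus y\in\overline{H}$.

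\smallskip

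Having verified the two conditions of Proposition~1.4, I conclude that $\overline{H}$ is a subgyrogroup of $G$. I do not expect any real obstacle here: the argument is a direct transcription of the classical ``closure of a subgroup is a subgroup'' proof, and the weaker axioms $(G3)$--$(G4)$ of gyrogroups play no role because the subgyrogroup test in Proposition~1.4 has already stripped the problem down to closure under the two basic operations.
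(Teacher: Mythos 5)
Your proof is correct, and it is essentially the standard argument: the paper itself states this proposition without proof (citing Atiponrat's paper), and the cited proof proceeds exactly as you do, via the subgyrogroup criterion (closure under $\ominus$ and $\oplus$) together with continuity of the two operations. The only cosmetic remark is that $\overline{H}\neq\emptyset$ should be noted (trivially, since $H\subseteq\overline{H}$) before invoking the criterion, and that your closure-under-$\oplus$ step could be compressed to $\overline{H}\oplus\overline{H}\subseteq\overline{H\oplus H}\subseteq\overline{H}$ by continuity of $\oplus$.
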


\begin{proposition}\cite{AW}
Let $(G,\oplus)$ be a gyrogroup, $U$ an open subset of $G$ and $A$ any subset of $G$. Then the set $U\oplus A$ and $A\oplus U$ are all open in $G$.
\end{proposition}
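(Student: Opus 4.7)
The plan is to reduce openness of the sets $U\oplus A$ and $A\oplus U$ to openness of left and right gyrotranslates of $U$, and then invoke Lemma \ref{n}, which says that for each $a\in G$ the maps $L_a$ and $R_a$ are homeomorphisms of $G$ onto itself.

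First I would decompose the sets as unions over $A$. From the definition of $A\oplus B$, we have
\begin{equation*}
A\oplus U=\bigcup_{a\in A}\{a\oplus u:u\in U\}=\bigcup_{a\in A}L_{a}(U),
\end{equation*}
and analogously
\begin{equation*}
U\oplus A=\bigcup_{a\in A}\{u\oplus a:u\in U\}=\bigcup_{a\in A}R_{a}(U).
\end{equation*}
Both decompositions are immediate from unpacking the sum-set notation.

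Next, since $U$ is open and each $L_{a}$ and $R_{a}$ is a homeomorphism of $(G,\tau)$ by Lemma \ref{n}, every set $L_{a}(U)$ and every set $R_{a}(U)$ is open in $G$. A union of open sets is open, so both $A\oplus U$ and $U\oplus A$ are open, which is exactly the conclusion.

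There is no real obstacle here: the statement is essentially a direct corollary of the fact that gyrotranslations are homeomorphisms, and the only thing to be careful about is writing $A\oplus U$ and $U\oplus A$ correctly as unions indexed by $A$ (rather than by $U$, which would also work but would use a less uniform family of maps, namely $y\mapsto u\oplus y$ and $y\mapsto y\oplus u$ for $u\in U$ — these are again $L_{u}$ and $R_{u}$, and lead to the same conclusion).
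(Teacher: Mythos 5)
Your argument is correct and is exactly the intended one: the paper states this proposition without proof (citing \cite{AW}), and the standard argument there is precisely your decomposition $A\oplus U=\bigcup_{a\in A}L_{a}(U)$, $U\oplus A=\bigcup_{a\in A}R_{a}(U)$ combined with Lemma \ref{n} that left and right gyrotranslations are homeomorphisms. The only (cosmetic) point worth noting is that the statement should be read for a topological gyrogroup $(G,\tau,\oplus)$, since both the hypothesis that $U$ is open and the appeal to Lemma \ref{n} presuppose the topology.
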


\begin{lemma}\label{o}
For any subsets $A, B$ and $C$ of a gyrogroup $G$, we have $(A\oplus B)\bigcap C=\emptyset$ if and only if $B\bigcap ((\ominus A)\oplus C)=\emptyset$.
\end{lemma}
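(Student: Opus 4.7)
The plan is to reduce the set-theoretic statement to the following element-level claim: for any $a,b,c\in G$, the equation $a\oplus b=c$ holds if and only if $b=(\ominus a)\oplus c$. Once this is established, both sides of the stated equivalence just say that there is no triple $(a,b,c)\in A\times B\times C$ realizing the equation, so the lemma follows by contrapositive.

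To prove the element-level claim, I would invoke Lemma \ref{a}(1), which states that $(\ominus x)\oplus(x\oplus y)=y$ for all $x,y\in G$. For the forward direction, assuming $a\oplus b=c$, I apply this identity with $x=a$, $y=b$ to get $b=(\ominus a)\oplus(a\oplus b)=(\ominus a)\oplus c$. For the reverse direction, I first note that $\ominus(\ominus a)=a$: indeed $a$ satisfies $a\oplus(\ominus a)=0=(\ominus a)\oplus a$, so by the uniqueness of inverses in axiom $(G2)$, $a$ is the inverse of $\ominus a$. Then applying Lemma \ref{a}(1) with $x=\ominus a$ and $y=c$ yields $a\oplus((\ominus a)\oplus c)=\ominus(\ominus a)\oplus((\ominus a)\oplus c)=c$, so if $b=(\ominus a)\oplus c$ then $a\oplus b=c$.

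With the element-level equivalence in hand, the lemma itself is immediate. Suppose $(A\oplus B)\cap C\neq\emptyset$; pick $a\in A$, $b\in B$, $c\in C$ with $a\oplus b=c\in C$. By the claim, $b=(\ominus a)\oplus c\in(\ominus A)\oplus C$, so $B\cap((\ominus A)\oplus C)\neq\emptyset$. Conversely, if $b\in B\cap((\ominus A)\oplus C)$, write $b=(\ominus a)\oplus c$ with $a\in A$, $c\in C$; the claim gives $a\oplus b=c\in(A\oplus B)\cap C$. Contrapositively, this is exactly the stated biconditional.

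There is no serious obstacle here; the only subtlety is that one must not invoke a group-style cancellation directly, since the naive computation $a\oplus((\ominus a)\oplus c)$ via $(G3)$ produces a gyration factor $gyr[a,\ominus a](c)$. The cleanest way to avoid dealing with that factor is to use Lemma \ref{a}(1) with $x=\ominus a$ rather than $x=a$, as above. This keeps the argument purely algebraic and independent of the topology, even though the lemma will later be used in topological contexts.
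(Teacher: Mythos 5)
Your proof is correct and follows essentially the same route as the paper's: both reduce the statement to the elementwise equivalence $a\oplus b=c \Leftrightarrow b=(\ominus a)\oplus c$, which the paper asserts without detail and you justify carefully via Lemma \ref{a}(1) and the identity $\ominus(\ominus a)=a$. No issues.
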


\begin{proof}
Obviously, $B\bigcap ((\ominus A)\oplus C)\neq\emptyset$ if and only if there exists a point $x\in G$ such that $x\in B\bigcap ((\ominus A)\oplus C)$, if and only if there exist $a\in A, b\in B, c\in C$ such that $x=b=\ominus a\oplus c$, if and only if $a\oplus b=c$, if and only if $(A\oplus B)\bigcap C\neq\emptyset$.
\end{proof}

\begin{theorem}\label{t29}
Let $(G, \tau, \oplus)$ be a topological gyrogroup, and $\mathscr U$ a neighborhood base of the space $G$ at the identity element $0$. Then, for every subset $A$ of $G$, we have $$\overline{A}=\bigcap  \{{U\oplus A, U\in \mathscr U}\}.$$
\end{theorem}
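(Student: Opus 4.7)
The plan is to prove the two inclusions separately, with the forward direction essentially already packaged in Lemma~\ref{m}(3), and the reverse direction reduced via Lemma~\ref{o} to a standard neighborhood argument using the continuity of right gyrotranslation.

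For the inclusion $\overline{A} \subseteq \bigcap_{U\in\mathscr U}(U\oplus A)$, I would simply invoke part~(3) of Lemma~\ref{m}: for \emph{every} neighborhood $W$ of $0$ we already know $\overline{A} \subseteq W\oplus A$, so intersecting over the base $\mathscr U$ gives the claim at once. No further work is needed here.

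For the reverse inclusion I would argue by contrapositive. Suppose $x\notin\overline{A}$; then $V := G\setminus\overline{A}$ is an open neighborhood of $x$ disjoint from $A$. By Lemma~\ref{n} the right gyrotranslation $R_x$ is a homeomorphism, and since $R_x(0)=0\oplus x=x\in V$, the preimage $R_x^{-1}(V)$ is an open neighborhood of $0$. Using part~(2) of Lemma~\ref{m}, pick an open neighborhood $W$ of $0$ with $\ominus W\subseteq R_x^{-1}(V)$, and then choose $U\in\mathscr U$ with $U\subseteq W$. Applying $R_x$ yields $(\ominus U)\oplus x\subseteq V$, hence $A\cap\bigl((\ominus U)\oplus \{x\}\bigr)=\emptyset$. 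Now Lemma~\ref{o}, applied with the triple $(U,A,\{x\})$ in place of $(A,B,C)$, converts this into $(U\oplus A)\cap\{x\}=\emptyset$, i.e.\ $x\notin U\oplus A$, as required.

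The main obstacle, such as it is, lies in the reverse direction: in a genuine gyrogroup one cannot just ``translate'' a neighborhood of $x$ back to a neighborhood of $0$ by $\ominus x$ and quote a group-theoretic identity, because left and right gyrotranslations do not interact as cleanly as in groups. The clean fix is to avoid algebraic manipulation altogether and instead exploit two separate facts: that $R_x$ is a homeomorphism (giving the required neighborhood of $0$ from a neighborhood of $x$), and that Lemma~\ref{o} provides the exact equivalence between the two disjointness statements needed to swap $U\oplus A$ against $(\ominus U)\oplus\{x\}$. Everything else is bookkeeping with the base $\mathscr U$.
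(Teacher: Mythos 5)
Your proof is correct and follows essentially the same route as the paper: the inclusion $\overline{A}\subseteq\bigcap\{U\oplus A: U\in\mathscr U\}$ comes directly from Lemma~\ref{m}(3), and the reverse inclusion is proved by contrapositive, pulling a neighborhood of $x$ back to $0$ via the translation, handling the inverse with Lemma~\ref{m}(2), and converting the disjointness statement with Lemma~\ref{o}. If anything, your use of Lemma~\ref{m}(2) together with $R_x$ is a bit more careful than the paper's choice of a symmetric $U\in\mathscr U$ inside $W$, which an arbitrary neighborhood base need not contain.
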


\begin{proof}

By (3) in Lemma~\ref{m}, it is clear that $\overline{A}\subset \bigcap \{U\oplus A, U\in \mathscr U\}$.

Next, it suffices to prove that $\bigcap \{U\oplus A,U\in \mathscr U\}\subset \overline{A}$. Assume $x\not\in \overline{A}$, then there exists $W\in \mathscr U$ such that $(W\oplus x)\bigcap A=\emptyset$. Let $U\in \mathscr U$ such that $\ominus U=U\subset W$. Then $((\ominus U)\oplus x)\bigcap A=\emptyset$. By Lemma \ref{o}, we have $x\not\in U\oplus A$. Hence, $\bigcap \{U\oplus A,U\in \mathscr U\}\subset \overline{A}$.

\end{proof}

\begin{theorem}\label{ttt}
Let $(G, \tau, \oplus)$ be a topological gyrogroup, and $\mathscr U$ a family of neighborhoods of the identity $0$ satisfying:

\smallskip
$a)$ For every $U\in \mathscr U$, there exists $V\in \mathscr U$ such that $V\oplus V\subset U$.

\smallskip
$b)$ For every $U\in \mathscr U$, there exists $V\in \mathscr U$ such that $\ominus V\subset U$.

\smallskip
Then $\bigcap \mathscr U$ is a closed subgyrogroup of $G$.

\end{theorem}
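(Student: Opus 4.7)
Let $H = \bigcap \mathscr{U}$. The plan is to verify, in order, that $H$ is a subgyrogroup by checking the two simple criteria from the earlier Proposition, and then that $H$ is closed by bounding $\overline{H}$ using Lemma~\ref{m}(3). First I observe $H$ is nonempty: every $U \in \mathscr{U}$ is a neighborhood of $0$, so $0 \in U$ for every $U$, whence $0 \in H$.

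For the subgyrogroup structure, by the Proposition it suffices to show that $\ominus x \in H$ whenever $x \in H$, and $x \oplus y \in H$ whenever $x,y \in H$. Fix $U \in \mathscr{U}$ arbitrary. For the inverse, pick $V \in \mathscr{U}$ with $\ominus V \subset U$ by hypothesis $(b)$; since $x \in H \subset V$, we get $\ominus x \in \ominus V \subset U$. For the binary operation, pick $V \in \mathscr{U}$ with $V \oplus V \subset U$ by hypothesis $(a)$; since $x, y \in H \subset V$, we get $x \oplus y \in V \oplus V \subset U$. As $U \in \mathscr{U}$ was arbitrary, both $\ominus x$ and $x \oplus y$ lie in $H$. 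Applying the Proposition, $H$ is a subgyrogroup of $G$.

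It remains to show $H$ is closed, and this is the only place where one must be slightly careful, since $\mathscr{U}$ is merely a family satisfying $(a),(b)$ and is not assumed to be a neighborhood base at $0$ (so Theorem~\ref{t29} is not directly applicable). I would argue as follows: fix $U \in \mathscr{U}$ and use $(a)$ to choose $V \in \mathscr{U}$ with $V \oplus V \subset U$. Since $V$ is a neighborhood of $0$, Lemma~\ref{m}(3) gives $\overline{H} \subset V \oplus H$, and because $H \subset V$ we conclude $\overline{H} \subset V \oplus V \subset U$. This holds for every $U \in \mathscr{U}$, so $\overline{H} \subset \bigcap \mathscr{U} = H$. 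Hence $H$ is closed, completing the proof. The main obstacle, as noted, is merely avoiding the assumption that $\mathscr{U}$ is a base; once one invokes Lemma~\ref{m}(3) together with the trivial inclusion $H \subset V$, everything falls into place.
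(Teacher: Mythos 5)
Your proof is correct and follows essentially the same route as the paper: verify the subgyrogroup property directly from conditions $(a)$ and $(b)$, then show $\overline{H}\subset H$ via Lemma~\ref{m}(3). The only cosmetic difference is that you apply Lemma~\ref{m}(3) to $A=H$ with $W=V$ (getting $\overline{H}\subset V\oplus H\subset V\oplus V\subset U$), while the paper applies it to each $U$ itself ($\overline{U}\subset U\oplus U$) and then intersects; both correctly avoid invoking Theorem~\ref{t29}, which would require $\mathscr U$ to be a base at $0$.
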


\begin{proof}
Let $H=\bigcap \mathscr U$. We assume that conditions $a)$ and $b)$ are satisfied. It is easy to see that $H=\bigcap \{U\oplus U: U\in \mathscr U\}=\bigcap \{\ominus U: U\in \mathscr U\}$. Clearly, for every $U\in \mathscr U$, we have that $H\oplus H\subset U\oplus U$ and $\ominus H\subset \ominus U$. Hence it follows that $H$ is a subgyrogroup of $G$. By (3) in Lemma~\ref{m}, we have that $\overline {U}\subset U\oplus U$ for every $U\in \mathscr U$. Therefore, $$\overline {H}=\overline{\bigcap \{\mathscr U\}}\subset \bigcap \{\overline{U}: U\in \mathscr U\}\subset \bigcap \{U\oplus U:U\in \mathscr U\}=H.$$ Thus $H=\overline{H}$.
\end{proof}

By \cite{CZ}, we see that every topological gyrogroup $G$ is rectifiable. Therefore, it follows from \cite{CZ} and \cite{LF2} respectively that we have the following two corollaries.

\begin{corollary}\label{d}\cite{CZ}
If  $(G,\tau ,\oplus)$ is a topological gyrogroup, then the gyrogroup is first-countable if and only if it is metrizable.
\end{corollary}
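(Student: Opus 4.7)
The backward implication is immediate, since every metrizable space is first-countable; the content lies in the forward direction.

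The plan is to reduce the statement to a known metrization theorem for rectifiable spaces. By \cite{CZ}, every topological gyrogroup $(G,\tau,\oplus)$ is rectifiable: a rectification can be built from the left gyrotranslations, each of which is a homeomorphism by Lemma~\ref{n}. Once $G$ has been exhibited as a rectifiable topological space, one invokes the metrization criterion for rectifiable spaces from \cite{LF2}: a first-countable rectifiable space is metrizable. Combining these two facts yields the corollary, which is why the authors present it as an immediate consequence of their earlier remark that every topological gyrogroup is rectifiable.

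If one prefers an argument internal to the gyrogroup framework, the natural route is a Birkhoff--Kakutani style construction. Starting from a countable neighborhood base at $0$, one uses parts (1) and (2) of Lemma~\ref{m} repeatedly to extract a decreasing sequence $\{U_n:n\in\omega\}$ of symmetric open neighborhoods of $0$ that is still a base at $0$ and satisfies $U_{n+1}\oplus U_{n+1}\subset U_n$ and $\ominus U_n=U_n$ for every $n$. From such a chain one builds a continuous prenorm $N\colon G\to[0,\infty)$ by the standard dyadic interpolation, and then defines $d(x,y)=N(\ominus x\oplus y)$. The left gyrotranslations being homeomorphisms (Lemma~\ref{n}) propagates compatibility with $\tau$ from $0$ to every point, and the $T_{1}$ hypothesis upgrades the resulting pseudo-metric to a genuine metric.

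The main obstacle in this direct approach is the triangle inequality. In the topological group setting one uses plain associativity when chasing $U_n$-chains, but here one must work modulo the gyroassociator, appealing to $(G3)$ and to the fact that each $\mathrm{gyr}[x,y]$ is a groupoid automorphism fixing $0$ and mapping $U_n$ onto a set one can still control via continuity of the operation. Lemma~\ref{a}(1) and Lemma~\ref{a}(4) are the natural tools for manipulating the expressions $\ominus x\oplus y$ that enter $d$. Given the cleaner citation-based route, the gyroassociator is precisely the technical wrinkle that motivates deferring to \cite{LF2} rather than re-proving the full metrization theorem inside this paper.
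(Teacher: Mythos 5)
Your main argument is exactly the paper's route: the result is quoted as an immediate consequence of the fact that every topological gyrogroup is rectifiable together with the known metrization theorem for first-countable rectifiable spaces (the paper attributes this to \cite{CZ}, while you credit \cite{LF2}, a harmless attribution slip). Your secondary ``internal'' Birkhoff--Kakutani sketch is not needed and, as you yourself note, the gyroassociator is a genuine obstruction there --- indeed the paper only carries out such a prenorm construction under the extra hypothesis of a \emph{strongly} topological gyrogroup (Lemma~\ref{s}) --- so it is right that you defer to the citation-based proof.
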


\begin{corollary}\label{c}\cite{LF2}
Let $(G,\tau ,\oplus)$ be a topological gyrogroup. Suppose that $H$ is a compact subgyrogroup of $G$, and $S$ is a closed subgyrogroup of $G$. Then $H\oplus S$ and $S\oplus H$ are all closed in G.
\end{corollary}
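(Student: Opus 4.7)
The plan is to derive the corollary from rectifiability, which is both the shortest route and matches the citation: by \cite{CZ} every topological gyrogroup is a rectifiable space, and the corresponding fact for rectifiable spaces --- the product of a compact set with a closed set is closed --- is proved in \cite{LF2}. Applying this to $H$ and $S$ immediately yields the conclusion.

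For a direct, self-contained argument using only the machinery already set up in the excerpt, I would argue by nets. To show that $H\oplus S$ is closed, pick $x\in\overline{H\oplus S}$ and a net $(h_\alpha\oplus s_\alpha)$ converging to $x$ with $h_\alpha\in H$ and $s_\alpha\in S$. Compactness of $H$ lets us assume $h_\alpha\to h\in H$ after passing to a subnet. By Lemma~\ref{a}(1) and continuity of $\ominus$ and $\oplus$,
\[
s_\alpha=(\ominus h_\alpha)\oplus(h_\alpha\oplus s_\alpha)\longrightarrow (\ominus h)\oplus x,
\]
so $s:=(\ominus h)\oplus x\in S$ since $S$ is closed. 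A second application of Lemma~\ref{a}(1) (together with the elementary identity $\ominus(\ominus h)=h$) gives $h\oplus s=h\oplus((\ominus h)\oplus x)=x$, so $x\in H\oplus S$.

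The case $S\oplus H$ is the harder half, because in a gyrogroup the right gyrotranslation $R_h$ is not inverted by $R_{\ominus h}$: one does not have $(s\oplus h)\oplus(\ominus h)=s$ in general. The key point is that Lemma~\ref{a}(3) supplies an explicit right inverse: the identity $(a\oplus gyr[a,h](\ominus h))\oplus h=a$ shows that whenever $s\oplus h=a$, one has $s=a\oplus gyr[a,h](\ominus h)$. Combined with the fact that $(x,y,z)\mapsto gyr[x,y](z)$ is jointly continuous, which follows from Lemma~\ref{a}(4) because it equals $\ominus(x\oplus y)\oplus(x\oplus(y\oplus z))$, this lets us pass to a subnet with $h_\alpha\to h\in H$, recover
\[
s_\alpha=(s_\alpha\oplus h_\alpha)\oplus gyr[s_\alpha\oplus h_\alpha,\,h_\alpha](\ominus h_\alpha)
\]
as a continuous function of the convergent data $h_\alpha$ and $s_\alpha\oplus h_\alpha$, take limits to obtain $s\in\overline{S}=S$ with $s\oplus h=x$, and hence conclude $x\in S\oplus H$. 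Tracking the gyration factors cleanly is the main technical obstacle of this direct route.
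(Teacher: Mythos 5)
Your first route is exactly what the paper does: Corollary~\ref{c} is stated without proof, justified only by the observation that every topological gyrogroup is rectifiable (\cite{CZ}) together with the corresponding theorem for rectifiable spaces in \cite{LF2}. Your second, direct net argument is a genuinely different and self-contained alternative, and it is correct: the $H\oplus S$ half is immediate from left cancellation (Lemma~\ref{a}(1)) plus $\ominus(\ominus h)=h$, and for $S\oplus H$ your use of Lemma~\ref{a}(3) together with the joint continuity of $(x,y,z)\mapsto gyr[x,y](z)$ (which indeed follows from Lemma~\ref{a}(4)) works. The only step you leave implicit is the passage from $s_\alpha\oplus h_\alpha=a_\alpha$ to $s_\alpha=a_\alpha\oplus gyr[a_\alpha,h_\alpha](\ominus h_\alpha)$: Lemma~\ref{a}(3) only shows that $a_\alpha\oplus gyr[a_\alpha,h_\alpha](\ominus h_\alpha)$ is \emph{a} solution of $z\oplus h_\alpha=a_\alpha$, so you also need uniqueness, i.e.\ injectivity of the right gyrotranslation $R_{h_\alpha}$; this is available in the paper (Lemma~\ref{n} says $R_x$ is a homeomorphism, and it is the standard loop property of gyrogroups), but it should be cited to make the recovery of $s_\alpha$ legitimate. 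With that one-line addition, your direct proof buys a proof inside the gyrogroup framework itself, avoiding the detour through rectifiable spaces and the external references the paper relies on, at the cost of handling the gyration bookkeeping you describe.
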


\smallskip
\section{feathered topological gyrogroups}
In this section, we discuss the strongly topological gyrogroups. We mainly prove that each feathered strongly topological gyrogroup is paracompact. First, we define the strongly topological gyrogroup.

Let $G$ be a topological gyrogroup. We say that $G$ is a {\it strongly topological gyrogroup} if there exists a neighborhood base $\mathscr U$ of $0$ such that, for every $U\in \mathscr U$, $gyr[x,y](U)=U$ for any $x, y\in G$. For convenience, we say that $G$ is a strongly topological gyrogroup with neighborhood base $\mathscr U$ of $0$.
Clearly, we may assume that $U$ is symmetric for each $U\in\mathscr U$. Moreover, it is easy to see that each topological group is a strongly topological gyrogroup. However, there exists a strongly topological gyrogroup which is not a topological group, see Example~\ref{e0}.

\begin{example}\label{e0}
There exists a strongly topological gyrogroup which is not a topological group.
\end{example}

\smallskip
Indeed, let $D$ be the complex open unit disk $\{z\in C:|z|<1\}$. We consider $D$ with the standard topology. In \cite[Example 2]{AW}, define a M\"{o}bius addition $\oplus _{M}: D\times D\rightarrow D$ to be a function such that $$a\oplus _{M}b=\frac{a+b}{1+\bar{a}b}\ \mbox{for all}\ a, b\in D.$$ Then $(D, \oplus _{M})$ is a gyrogroup, and it follows from \cite[Example 2]{AW} that $$gyr[a, b](c)=\frac{1+a\bar{b}}{1+\bar{a}b}c\ \mbox{for any}\ a, b, c\in D.$$ For any $n\in\omega$, let $U_{n}=\{x\in D: |x|\leq \frac{1}{n}\}$. Then, $\mathscr U=\{U_{n}: n\in \omega\}$ is a neighborhood base of $0$. Moreover, we observe that $|\frac{1+a\bar{b}}{1+\bar{a}b}|=1$. Therefore, we obtain that $gyr[x, y](U)\subset U$, for any $x, y\in D$ and each $U\in \mathscr U$, then it follows that $gyr[x, y](U)=U$ by \cite[Proposition 2.6]{ST}. Hence, $(D, \oplus _{M})$ is a strongly topological gyrogroup. However, $(D, \oplus _{M})$ is not a group \cite[Example 2]{AW}.

\bigskip
{\bf Remark} M\"{o}bius gyrogroups, Einstein gyrogroups, and Proper velocity gyrogroups, that were studied e.g. in \cite{FM, FM1,UA},  are all strongly topological gyrogroups; however, they do not possess any non-trivial $L$-subgyrogroups, hence the theory of this paper does't apply to the M\"{o}bius gyrogroups, Einstein gyrogroups or Proper Velocity gyrogroups. However, there exists a class of strongly topological gyrogroups which has a non-trivial $L$-subgyrogroup, see Example~\ref{e1}.

\begin{example}\label{e1}
There exists a strongly topological gyrogroup which has an infinite $L$-subgyrogroup.
\end{example}

\smallskip
Indeed, let $X$ be an arbitrary feathered non-metrizable topological group, and let $Y$ be an any strongly topological gyrogroup with a non-trivial $L$-subgyrogroup (such as the gyrogroup $K_{16}$ \cite[p. 41]{UA2002}). Put $G=X\times Y$ with the product topology and the operation with coordinate. Then $G$ is an infinite strongly topological gyrogroup since $X$ is infinite. Let $H$ be a non-trivial $L$-subgyrogroup of $Y$, and take an arbitrary infinite subgroup $N$ of $X$. Then $N\times H$ is an infinite $L$-subgyrogroup of $G$.

\bigskip
The following two questions are well known in the study of rectifiable spaces.

\begin{question}\cite{AA1}\label{k}
Is every rectifiable $p$-space paracompact?
\end{question}

\begin{question}\cite{AA1}\label{j}
Is every rectifiable $p$-space a $D$-space?
\end{question}

It is well known that a paracompact $p$-space is a $D$-space. Obviously, if we can prove that each rectifiable p-space is paracompact, then the answer to Question \ref{j} is also positive. Since each topological gyrogroup is a rectifiable space, it is natural to pose the following two questions.

\begin{question}\label{kk}
If a topological gyrogroup is a $p$-space, is it paracompact? What if the topological gyrogroup is a strongly topological gyrogroup?
\end{question}

\begin{question}\label{jj}
If a topological gyrogroup is a $p$-space, is it a $D$-space? What if the topological gyrogroup is a strongly topological gyrogroup?
\end{question}

Next we prove that every feathered strongly topological gyrogroup is paracompact, which gives an affirmative answer to Question \ref{kk} when the topological gyrogroup is a strongly topological gyrogroup, see Corollary \ref{l}. Recall that a topological gyrogroup $G$ is {\it feathered} if it contains a non-empty compact set $K$ of countable character in $G$.  It is well known that each $p$-space is feathered.

We recall the following concept of the coset space of a topological gyrogroup.

Let $(G, \tau, \oplus)$ be a topological gyrogroup and $H$ a $L$-subgyrogroup of $G$. We define a binary operation on the coset $G/H$ in the following natural way:$$(a\oplus H)\oplus (b\oplus H)=(a\oplus b)\oplus H,$$ for any $a, b\in G$. By \cite[Theorem 20]{ST}, it follows that $G/H$ with the above operation forms a disjoint partition of $G$. We denote by $\varphi$ the mapping $a\mapsto a\oplus H$ from $G$ onto $G/H$. Clearly, we have $\varphi(a\oplus b)=\varphi(a)\oplus \varphi(b)$ for any $a, b\in G$, and for each $a\in G$, we have $\varphi^{-1}\{\varphi(a)\}=a\oplus H$.
Denote by $\tau (G)$ the topology of $G$. In the set $G/H$, we define a family $\tau (G/H)$ of subsets as follows: $$\tau (G/H)=\{O\subset G/H: \varphi^{-1}(O)\in \tau (G)\}.$$

\begin{theorem}\label{t00000}
Let $(G, \tau, \oplus)$ be a topological gyrogroup and $H$ a $L$-subgyrogroup of $G$. Then the natural homomorphism $\varphi$ from a topological gyrogroup $G$ to its quotient topology on $G/H$ is an open and continuous mapping.
\end{theorem}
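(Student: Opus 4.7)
The proof splits cleanly into the two halves of the conclusion, and both are essentially immediate consequences of the definition of $\tau(G/H)$ together with the partition structure of $G/H$ and the earlier openness result for $U\oplus A$.

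First I would dispose of continuity. By the very definition
$$\tau(G/H)=\{O\subset G/H:\varphi^{-1}(O)\in\tau(G)\},$$
every $O\in\tau(G/H)$ has $\varphi^{-1}(O)\in\tau(G)$, so $\varphi$ is continuous. Nothing gyrogroup-specific is needed here.

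For openness, let $U\in\tau(G)$. To show $\varphi(U)\in\tau(G/H)$ I need $\varphi^{-1}(\varphi(U))\in\tau(G)$. The plan is to identify this preimage with $U\oplus H$. For any $a\in G$, $\varphi(a)\in\varphi(U)$ means $a\oplus H=u\oplus H$ for some $u\in U$; since, by \cite[Theorem 20]{ST} cited above, the cosets $\{x\oplus H:x\in G\}$ form a disjoint partition of $G$, this is equivalent to $a\in u\oplus H$ (the forward direction using $a=a\oplus 0\in a\oplus H=u\oplus H$, and the reverse using that two cosets which share a point must coincide). Hence
$$\varphi^{-1}(\varphi(U))=\bigcup_{u\in U}(u\oplus H)=U\oplus H.$$
By the proposition stating that $U\oplus A$ is open in $G$ whenever $U$ is open, $U\oplus H$ is open in $G$, so $\varphi^{-1}(\varphi(U))\in\tau(G)$ and therefore $\varphi(U)\in\tau(G/H)$.

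The only step requiring any thought is the identification $\varphi^{-1}(\varphi(U))=U\oplus H$, and even this is routine once one invokes the disjoint partition property of the coset decomposition guaranteed by the $L$-subgyrogroup hypothesis; the $L$-condition is used implicitly there via the cited \cite[Theorem 20]{ST}. Everything else is formal, so I do not anticipate a genuine obstacle in carrying this out.
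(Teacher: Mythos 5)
Your proposal is correct and follows essentially the same route as the paper: continuity from the definition of $\tau(G/H)$, and openness by identifying $\varphi^{-1}(\varphi(U))=U\oplus H$ (via the coset partition, i.e. $\varphi^{-1}(\varphi(g))=g\oplus H$) and invoking the openness of $U\oplus A$ for open $U$. Your explicit justification of the identification using the disjointness of the cosets is slightly more detailed than the paper's, but it is the same argument.
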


\begin{proof}
By the definition of the quotient topology, the mapping $\varphi$ is continuous. Next we show that $\varphi$ is open. Then it suffices to prove that $\varphi(U)\in \tau(G/ H)$ for every $U\in \tau(G)$. By the definition of topology $\tau(G/ H)$, we have that $\varphi(U)\in \tau(G/ H)$ when $\varphi^{-1}(\varphi(U))\in \tau(G)$. For every $g\in G$, we have $\varphi^{-1}(\varphi(g))=g\oplus H$. As a consequence, $\varphi^{-1}(\varphi(U))=\bigcup_{g\in U}(g\oplus H) =U\oplus H$. So, $U\oplus H \in\tau(G)$. Hence, we have $\varphi(U)\in \tau(G/H)$ for every $U\in \tau(G)$. Therefore, $\varphi$ is an open mapping.
\end{proof}

\begin{theorem}\label{f}
Let $(G, \tau, \oplus)$ be a topological gyrogroup and $H$ a $L$-subgyrogroup of $G$. If $H$ is a compact subgyrogroup of $G$, then the quotient mapping $\varphi$ of $G$ onto the quotient space $G/H$ is perfect.
\end{theorem}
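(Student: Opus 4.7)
The plan is to verify the two requirements for perfectness separately: compactness of the fibers, and closedness of $\varphi$. Compactness of the fibers is immediate: for any $a\in G$, the fiber is $\varphi^{-1}(\varphi(a))=a\oplus H=L_{a}(H)$, and by Lemma~\ref{n} the left gyrotranslation $L_{a}$ is a homeomorphism, so it sends the compact set $H$ to a compact set. Continuity of $\varphi$ is Theorem~\ref{t00000}, so the only remaining job is to prove that $\varphi$ is a closed mapping.

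For closedness, since $\varphi$ is a continuous open surjection (Theorem~\ref{t00000}), a subset $A\subset G/H$ is closed iff $\varphi^{-1}(A)$ is closed in $G$, and for any $F\subset G$ one has $\varphi^{-1}(\varphi(F))=F\oplus H$. Thus it suffices to show: for every closed $F\subset G$, the set $F\oplus H$ is closed. To handle the obstruction created by non-associativity, I plan to introduce the map $\psi:G\times G\to G$ given by $\psi(a,b)=a\oplus gyr[a,b](\ominus b)$. By Lemma~\ref{a}(4) one has $gyr[a,b](\ominus b)=\ominus(a\oplus b)\oplus a$, so $\psi(a,b)=a\oplus(\ominus(a\oplus b)\oplus a)$, which is jointly continuous in $(a,b)$. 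Lemma~\ref{a}(3) gives the crucial identity $\psi(a,b)\oplus b=a$, and since the right gyrotranslation $R_b$ is injective (being a homeomorphism by Lemma~\ref{n}), we obtain the equivalence: for each $b\in G$, $a\in F\oplus b$ if and only if $\psi(a,b)\in F$.

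Now fix $x\in G\setminus(F\oplus H)$. The above equivalence, applied with $b$ ranging over $H$, gives $\psi(\{x\}\times H)\cap F=\emptyset$, i.e., $\{x\}\times H\subset \psi^{-1}(G\setminus F)$, which is open in $G\times G$. Since $H$ is compact, the tube lemma produces an open neighborhood $U$ of $x$ with $U\times H\subset\psi^{-1}(G\setminus F)$; equivalently, $\psi(U\times H)\cap F=\emptyset$. If some $u\in U$ were in $F\oplus H$, say $u=f\oplus h$ with $f\in F,h\in H$, then by the equivalence $f=\psi(u,h)\notin F$, a contradiction. Hence $U\cap(F\oplus H)=\emptyset$, so $F\oplus H$ is closed and $\varphi$ is closed, finishing the proof.

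The main obstacle is precisely the non-associativity: the analogous result for topological groups uses the elementary observation that $u=fh$ forces $f=uh^{-1}$, but in a gyrogroup one must invert the equation $f\oplus h=u$ in the presence of a gyration, which is exactly what Lemma~\ref{a}(3) supplies. Equally important is that Lemma~\ref{a}(4) makes the gyration jointly continuous, so that $\psi$ is continuous and the tube lemma applies. It is worth remarking that the $L$-subgyrogroup hypothesis is not used in the closedness argument itself; it is needed only to justify that $G/H$ and the quotient map $\varphi$ are well defined in the sense used in Theorem~\ref{t00000}.
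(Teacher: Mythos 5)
Your argument is correct, but at the decisive step it takes a genuinely different route from the paper. The paper obtains closedness of $\varphi$ by invoking Corollary~\ref{c} (imported from \cite{LF2}) to conclude that $P\oplus H$ is closed for every closed $P\subset G$, and then just notes $\varphi^{-1}(\varphi(P))=P\oplus H$ and that each fiber is a gyrotranslate of $H$. You instead prove the needed fact from scratch: that $F\oplus H$ is closed whenever $F$ is closed and $H$ is compact. Your mechanism is sound: Lemma~\ref{a}(3) gives $\psi(a,b)\oplus b=a$ for $\psi(a,b)=a\oplus gyr[a,b](\ominus b)$, Lemma~\ref{a}(4) rewrites this as $a\oplus(\ominus(a\oplus b)\oplus a)$ so that $\psi$ is jointly continuous, injectivity of $R_b$ (Lemma~\ref{n}) yields the equivalence $a\in F\oplus b \Leftrightarrow \psi(a,b)\in F$, and the tube lemma applied to $\{x\}\times H\subset\psi^{-1}(G\setminus F)$ finishes the argument. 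This self-contained route also quietly repairs a small mismatch in the paper's proof: Corollary~\ref{c} as stated assumes the closed factor is a subgyrogroup, whereas the paper applies it to an arbitrary closed set $P$; the more general statement is exactly what your tube-lemma argument establishes. What the paper's approach buys is brevity (one citation); what yours buys is independence from \cite{LF2} and an argument valid verbatim for arbitrary closed sets, with the $L$-subgyrogroup hypothesis used, as you correctly observe, only to guarantee that the cosets partition $G$ so that $\varphi^{-1}(\varphi(F))=F\oplus H$ and the quotient construction makes sense. Fiber compactness is handled identically in both proofs.
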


\begin{proof}
Take any closed subset $P$ of $G$. Since $H$ is a compact subgyrogroup of $G$, it follows from Corollary \ref{c} that $P\oplus H$ is closed in $G$. Obviously, $\varphi^{-1}(\varphi(P))=P\oplus H$. It follows from the definition of a quotient mapping that the set $\varphi(P)$ is closed in the quotient space $G/H$. Thus $\varphi$ is a closed mapping. Furthermore, if $y\in G/H$ and $\varphi(x)=y$ for some $x\in G$, we obtain that $\varphi^{-1}(y)=x\oplus H$ is a compact subset of $G$. Hence, the fibers of $\varphi$ are compact. Thus $\varphi$ is perfect.
\end{proof}

\begin{corollary}
Let $(G, \tau, \oplus)$ be a topological gyrogroup and $H$ a $L$-subgyrogroup of $G$. If $H$ is a compact subgyrogroup of $G$ such that the quotient space $G/H$ is compact, then $G$ is compact.
\end{corollary}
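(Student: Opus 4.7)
The plan is to deduce this immediately from Theorem~\ref{f} together with the standard fact that preimages of compact sets under perfect maps are compact. Specifically, let $\varphi:G\to G/H$ denote the natural quotient map $a\mapsto a\oplus H$. By Theorem~\ref{f}, since $H$ is a compact $L$-subgyrogroup, $\varphi$ is a perfect mapping: it is continuous, closed, and has compact fibers (each fiber being a left gyrotranslate $x\oplus H$, which is compact by Lemma~\ref{n} and compactness of $H$).

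Next I would invoke the classical result that if $f:X\to Y$ is a perfect surjection and $Y$ is compact, then $X$ is compact (see, e.g., Engelking \cite{E}). The short argument, in case one wants to be explicit, is as follows: take any open cover $\mathscr{V}$ of $G$; for each $y\in G/H$ the fiber $\varphi^{-1}(y)$ is compact, hence covered by finitely many members of $\mathscr{V}$, whose union $W_y$ is an open set containing $\varphi^{-1}(y)$; by closedness of $\varphi$, the set $O_y=(G/H)\setminus \varphi(G\setminus W_y)$ is open in $G/H$, contains $y$, and satisfies $\varphi^{-1}(O_y)\subset W_y$. Compactness of $G/H$ yields finitely many $y_1,\dots,y_n$ such that $\{O_{y_i}\}$ covers $G/H$, and then the corresponding finite subfamily of $\mathscr{V}$ associated with $W_{y_1},\dots,W_{y_n}$ covers $G$.

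Since everything needed is already packaged in Theorem~\ref{f} and the ambient spaces are assumed $T_1$, there is no real obstacle here; the only point worth checking is that the quotient $G/H$ really is Hausdorff (or at least that the perfect-map implication applies in our setting), but compactness of the fibers plus closedness of $\varphi$ are all one needs for the preimage-of-compact argument, so no separation hypothesis on $G/H$ beyond what is already assumed is required. The proof thus reduces to a one-line appeal: $G=\varphi^{-1}(G/H)$ is compact because $\varphi$ is perfect and $G/H$ is compact.
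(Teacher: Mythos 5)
Your argument is correct and is exactly the route the paper intends: the corollary is stated without proof as an immediate consequence of Theorem~\ref{f} (the quotient map $\varphi:G\to G/H$ is perfect) together with the standard fact that preimages of compact spaces under perfect mappings are compact. Your explicit tube-lemma-style verification of that classical fact is fine and adds nothing problematic.
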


\begin{lemma}\label{b}
Let $(G, \tau, \oplus)$ be a topological gyrogroup with identity element $0$ and $F$ a compact subset of $G$ containing $0$ and having a countable base $\{U_{n}:n\in \omega\}$ in $G$. Suppose that a sequence $\gamma =\{V_{n}: n\in \omega\}$ of open symmetric neighborhoods of $0$ in $G$ satisfies $V_{n+1}\oplus V_{n+1}\subset V_{n}\cap U_{n}$ for each $n\in \omega$. Then $H=\bigcap _{n\in \omega}V_{n}$ is a compact subgyrogroup of $G$, $H\subset F$, and $\gamma$ is a base for $G$ at $H$. In particular, if $(G, \tau, \oplus)$ is a strongly topological gyrogroup with a base $\mathscr{U}$ at $0$ consists of symmetric neighborhoods such that each $V_{n}\in \mathscr{U}$, then $H$ is also a $L$-subgyrogroup of $G$.
\end{lemma}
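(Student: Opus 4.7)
The plan is to verify the four assertions of the lemma in turn, leaning on Theorem~\ref{ttt} and Lemma~\ref{m}(3) as the main tools. First, to see that $H=\bigcap_{n\in\omega}V_{n}$ is a closed subgyrogroup I would apply Theorem~\ref{ttt} to $\mathscr{U}=\gamma$: condition $(a)$ is precisely the hypothesis $V_{n+1}\oplus V_{n+1}\subset V_{n}$, and condition $(b)$ is automatic because every $V_{n}$ is symmetric, so $\ominus V_{n}=V_{n}\in\gamma$. This delivers at once that $H$ is a closed subgyrogroup of $G$.

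Next, to prove $H\subset F$ and deduce compactness, I would observe that $0\in V_{n+1}$ gives $V_{n+1}\subset V_{n+1}\oplus V_{n+1}\subset U_{n}$, so $H\subset\bigcap_{n\in\omega}U_{n}$. Since every topological gyrogroup is Hausdorff and $F$ is compact, a standard argument (for each $y\notin F$ separate $y$ from each point of $F$ by disjoint opens and extract a finite subcover of $F$) produces an open $W\supset F$ with $y\notin W$, hence some $U_{n}\subset W$ with $y\notin U_{n}$; thus $\bigcap_{n\in\omega}U_{n}=F$ and $H\subset F$. As $H$ is closed in $G$ and sits inside the compact set $F$, $H$ is compact.

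The main step is showing that $\gamma$ is a base at $H$; this is the only nonroutine point. Given open $W\supset H$, I would argue by contradiction: assume $V_{n}\not\subset W$ for every $n$ and pick $x_{n}\in V_{n}\setminus W$. First I would verify that $(x_{n})$ has a cluster point in $G$. If not, each $y\in G$ has a neighborhood meeting $\{x_{n}:n\in\omega\}$ in a finite set; by compactness of $F$, finitely many such neighborhoods of points of $F$ cover $F$, yielding an open $W'\supset F$ with $W'\cap\{x_{n}\}$ finite. But $\{U_{n}\}$ is a base for $G$ at $F$, so $U_{m}\subset W'$ for some $m$, and then $x_{n}\in V_{n}\subset U_{n-1}\subset W'$ for every $n>m$, a contradiction. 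So a cluster point $p$ exists. Since the tail $\{x_{k}:k\geq n+1\}$ lies in $V_{n+1}$, Lemma~\ref{m}(3) applied with the neighborhood $V_{n+1}$ gives $\overline{V_{n+1}}\subset V_{n+1}\oplus V_{n+1}\subset V_{n}$, so $p\in\overline{V_{n+1}}\subset V_{n}$ for every $n$, whence $p\in H\subset W$. On the other hand $p\in\overline{\{x_{n}\}}\subset G\setminus W$, which is the desired contradiction.

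Finally, under the strongly topological hypothesis with $V_{n}\in\mathscr{U}$, every gyration fixes each $V_{n}$ setwise, so for any $a\in G$ and $h\in H$ the bijection $gyr[a,h]$ commutes with the intersection, giving $gyr[a,h](H)=\bigcap_{n\in\omega}gyr[a,h](V_{n})=\bigcap_{n\in\omega}V_{n}=H$; thus $H$ is an $L$-subgyrogroup. The only substantial work is the cluster-point argument in the third paragraph; the other three assertions are essentially bookkeeping once Theorem~\ref{ttt} and Lemma~\ref{m}(3) are in hand.
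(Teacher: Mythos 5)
Your argument is correct, and for the central claim --- that $\gamma$ is a base for $G$ at $H$ --- it takes a genuinely different route from the paper. The paper argues directly: it sets $K=F\setminus W$, uses compactness of $K$ to find $n$ with $V_{n}\cap K=\emptyset$, covers $F$ by the open set $W\cup(V_{n+1}\oplus K)$, chooses $U_{m}$ inside it, and then shows via left cancellation (Lemma~\ref{a}) that $(V_{n+1}\oplus K)\cap V_{k+1}=\emptyset$ for $k=\max\{m,n\}$, concluding $V_{k+1}\subset W$; this is constructive, producing an explicit member of $\gamma$ inside $W$, at the cost of a gyrogroup-specific cancellation computation. You instead argue by contradiction with a sequence $x_{n}\in V_{n}\setminus W$, use compactness of $F$ together with the countable base $\{U_{n}\}$ to extract a cluster point, and trap that point both in $H\subset W$ (via $\overline{V_{n+1}}\subset V_{n+1}\oplus V_{n+1}\subset V_{n}$, i.e.\ Lemma~\ref{m}(3)) and in the closed set $G\setminus W$; this is a softer, purely topological argument that avoids the cancellation step entirely. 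The remaining parts agree with the paper in substance: Theorem~\ref{ttt} for the closed subgyrogroup, $H\subset\bigcap_{n}U_{n}=F$ for compactness (your Hausdorff separation can even be shortened to the $T_{1}$ observation that $F$ is the intersection of its open neighborhoods), and the gyration computation for the $L$-subgyrogroup claim, where your appeal to injectivity of $gyr[a,h]$ gives the equality $gyr[a,h](H)=H$ directly, whereas the paper records only the inclusion.

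One small repair in the cluster-point step: you write $x_{n}\in V_{n}\subset U_{n-1}\subset W'$ for every $n>m$, but the inclusion $U_{n-1}\subset W'$ is not justified, since the base $\{U_{n}\}$ at $F$ is not assumed to be decreasing (only $U_{m}\subset W'$ is known). The inclusion you actually need is $x_{n}\in V_{n}\subset V_{m+1}\subset V_{m+1}\oplus V_{m+1}\subset U_{m}\subset W'$ for all $n\geq m+1$, which uses the monotonicity of the $V_{n}$'s; with this substitution your argument goes through unchanged.
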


\begin{proof}
For each $n\in \omega$, since $V_{n+1}\oplus V_{n+1}\subset V_{n}$, we have $\overline{V_{n+1}} \subset V_{n}$. From the definition of $H$, it follows that $H=\bigcap _{n\in \omega}V_{n}=\bigcap _{n\in \omega}\overline{V_{n+1}}$ is closed $L$-subgyrogroup of $G$. In addition, $H\subset \bigcap _{n\in \omega}U_{n}=F$, so $H$ is compact.

Let $W$ be an open neighborhood of $H$ in $G$. Then $K=F\backslash W$ is a compact subset of $G$ disjoint from $H$. If $\overline{V_{n}}\bigcap K\not=\emptyset$ for each $n\in \omega$, then, by the compactness of $K$, the set $K\cap (\bigcap _{n\in \omega}V_{n})=K\cap H$ is not empty, which is a contradiction. So, $K\cap V_{n}=\emptyset$ for some $n\in \omega$. It is clear that $F\subset W\cup K\subset W\cup(V_{n+1}\oplus K)$. Since $W\cup (V_{n+1}\oplus K)$ is open in $G$, there exists an integer $m$ such that $U_{m}\subset W\cup (V_{n+1}\oplus K)$. Let $k=max\{m,n\}$. Observe that $K\cap (V_{k+1}\oplus V_{n+1})\subset K\cap V_{n}=\emptyset$. We claim that $(V_{n+1}\oplus K)\cap V_{k+1}=\emptyset$. Suppose not, assume that $(V_{n+1}\oplus K)\cap V_{k+1}\not=\emptyset$. Then there exists $v\in V_{n+1}$, $h\in K$, $u\in V_{k+1}$ such that $v\oplus h=u$. By Theorem \ref{a}, we have that $h=\ominus v\oplus u\in V_{n+1}\oplus V_{k+1}$. Therefore, $K\cap (V_{n+1}\oplus V_{k+1})\not=\emptyset$, which is contradiction with $K\cap V_{n}=\emptyset$. Therefore, $V_{k+1}=V_{k+1}\cap U_{m}\subset V_{k+1}\cap (W\cup (V_{n+1}\oplus K))=V_{k+1}\cap W\subset W$. That is $V_{k+1}\subset W$. This proves that $\gamma$ is a base for $G$ at $H$.

Clearly, if $(G, \tau, \oplus)$ is a strongly topological gyrogroup, then $$gyr[x, y](H)=gyr[x, y](\bigcap _{n\in \omega}V_{n})\subset\bigcap _{n\in \omega}gyr[x, y](V_{n})=\bigcap _{n\in \omega}V_{n}=H,$$ hence $H$ is a $L$-subgyrogroup of $G$.
\end{proof}

\begin{theorem}\label{g}
Let $(G, \tau, \oplus)$ be a feathered strongly topological gyrogroup and $O$ be a neighborhood of the identity element $0$ in $G$. Then there exists a compact $L$-subgyrogroup $H$ of countable character in $G$ satisfying $H\subset O$.
\end{theorem}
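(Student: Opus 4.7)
The plan is to verify the hypotheses of Lemma~\ref{b} for a suitably chosen compact set $F$ containing $0$, using a decreasing sequence $\{V_n\}$ drawn from the distinguished base $\mathscr{U}$ witnessing the strongly topological gyrogroup structure. First I would translate the feathered witness so that it contains the identity: by hypothesis there is a nonempty compact $K\subset G$ of countable character in $G$, and picking any $k\in K$ and setting $F=(\ominus k)\oplus K=L_{\ominus k}(K)$ yields, via the homeomorphism $L_{\ominus k}$ from Lemma~\ref{n}, a compact set containing $0=(\ominus k)\oplus k$ with countable character. Enumerate a decreasing countable base $\{U_n:n\in\omega\}$ of open neighborhoods of $F$ in $G$.

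Next, I would construct the chain $\{V_n\}\subset\mathscr{U}$ recursively, assuming (as noted in the paper) that the elements of $\mathscr{U}$ may be taken symmetric. Start with any $V_0\in\mathscr{U}$ satisfying $V_0\subset O\cap U_0$. Given $V_n\in\mathscr{U}$, apply Lemma~\ref{m}(1) to obtain an open symmetric neighborhood $W$ of $0$ with $W\oplus W\subset V_n\cap U_n$, and then select a symmetric $V_{n+1}\in\mathscr{U}$ with $V_{n+1}\subset W$; this is possible because $\mathscr{U}$ is a neighborhood base at $0$. The resulting sequence satisfies $V_{n+1}\oplus V_{n+1}\subset V_n\cap U_n$ for every $n\in\omega$, with $V_0\subset O$ and each $V_n\in\mathscr{U}$.

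Finally, I would invoke Lemma~\ref{b}. Its hypotheses are met, so $H=\bigcap_{n\in\omega}V_n$ is a compact subgyrogroup of $G$ with $H\subset F$, and $\{V_n:n\in\omega\}$ is a base for $G$ at $H$, giving $H$ countable character in $G$. Since each $V_n$ lies in $\mathscr{U}$, the ``in particular'' clause of Lemma~\ref{b} upgrades $H$ to an $L$-subgyrogroup. The inclusion $H\subset V_0\subset O$ completes the argument.

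The only delicate point is ensuring that the approximating neighborhoods are drawn from $\mathscr{U}$ rather than being arbitrary symmetric neighborhoods furnished by Lemma~\ref{m}(1), because the $L$-subgyrogroup upgrade in Lemma~\ref{b} depends precisely on having $gyr[x,y](V_n)=V_n$ for each $n$, i.e.\ on $V_n\in\mathscr{U}$. This is handled uniformly by always slipping a member of $\mathscr{U}$ inside the symmetric set $W$ produced by Lemma~\ref{m}(1); no deeper obstacle arises.
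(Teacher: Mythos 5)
Your proposal is correct and follows essentially the same route as the paper: translate the feathered witness so it contains $0$, inductively build a sequence $\{V_n\}\subset\mathscr{U}$ of symmetric neighborhoods with $V_0\subset O$ and $V_{n+1}\oplus V_{n+1}\subset V_n\cap U_n$, and apply Lemma~\ref{b} to conclude $H=\bigcap_n V_n$ is a compact $L$-subgyrogroup of countable character contained in $O$. You merely spell out the homogeneity translation and the step of slipping a member of $\mathscr{U}$ inside the set from Lemma~\ref{m}(1), which the paper leaves implicit.
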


\begin{proof}
Let $\mathscr{U}$ be the symmetric neighborhood base at $0$ such that for any $x, y\in G$, $gyr[x, y](U)=U$ for any $U\in\mathscr{U}$. Since the gyrogroup $G$ is feathered, it contains a non-empty compact set $F$ with $\chi (F, G)\leq \omega$. By the homogeneity of $G$, we can assume that $F$ contains the identity element $0$ of $G$. Let $\{U_{n}:n\in \omega \}$ be a countable base for $G$ at $F$. We define by induction a sequence $\{V_{n}:n\in \omega\}$ of symmetric open neighborhood of $0$ in $G$ satisfying the following conditions:

\smallskip
(i) $V_{0}\subset O$.

\smallskip
(ii) $V_{n+1}\oplus V_{n+1}\subset U_{n}\bigcap V_{n}$ for each $n\in \omega$.

\smallskip
(iii) $V_{n}\in\mathscr{U}$ for each $n\in \omega$.

\smallskip
Put $H=\bigcap _{n\in \omega}V_{n}$. Then $H\subset O$ by (i). Furthermore, it follows from Lemma~\ref{b} that $H$ is a compact $L$-subgyrogroup of $G$ and  $\{V_{n}:n\in \omega\}$ is a base for $G$ at $H$.
\end{proof}

\begin{lemma}\label{s}
Let $G$ be a strongly topological gyrogroup with the symmetric neighborhood base $\mathscr{U}$ at $0$, and let $\{U_{n}: n\in\omega\}$ and $\{V(m/2^{n}): n, m\in\omega\}$ be two sequences of open neighborhoods satisfying the following conditions (1)-(5):

\smallskip
(1) $U_{n}\in\mathscr{U}$ for each $n\in \omega$.

\smallskip
(2) $U_{n+1}\oplus U_{n+1}\subset U_{n}$, for each $n\in \omega$.

\smallskip
(3) $V(1)=U_{0}$;

\smallskip
(4) For any $n\geq 1$, put $$V(1/2^{n})=U_{n}, V(2m/2^{n})=V(m/2^{n-1})$$ for $m=1,...,2^{n}$, and $$V((2m+1)/2^{n})=U_{n}\oplus V(m/2^{n-1})=V(1/2^{n})\oplus V(m/2^{n-1})$$ for each $m=1,...,2^{n}-1$;

\smallskip
(5) $V(m/2^{n})=G$ when $m>2^{n}$;

\smallskip
Then there exists a prenorm $N$ on $G$ satisfies the following conditions:

\smallskip
(a) for any fixed $x, y\in G$, we have $N(gyr[x,y](z))=N(z)$ for any $z\in G$;

\smallskip
(b) for any $n\in\omega$, $$\{x\in G:N(x)<1/2^{n}\}\subset U_{n}\subset\{x\in G:N(x)\leq 2/2^{n}\}.$$
\end{lemma}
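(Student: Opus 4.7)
The plan is a Birkhoff--Kakutani style construction, adapted to the gyrogroup setting by exploiting the $gyr$-invariance of the sets $U_n$ to neutralize non-associativity. The first preliminary observation is that every $V(m/2^n)$ is invariant under every gyroautomorphism $gyr[x,y]$: this is a direct induction along the recursion (4), since each $U_n\in\mathscr U$ is invariant by hypothesis and $gyr[x,y]$ is a groupoid automorphism of $(G,\oplus)$, so it distributes over $\oplus$. A pleasant consequence, which I will call the associativity lemma, is that whenever $C\subset G$ is $gyr$-invariant,
$$A\oplus(B\oplus C)=(A\oplus B)\oplus C\qquad\text{for all } A,B\subset G,$$
because the gyration $gyr[a,b]$ appearing in $a\oplus(b\oplus c)=(a\oplus b)\oplus gyr[a,b](c)$ permutes $C$ bijectively onto itself. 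Thus on products of $V(m/2^n)$-sets, $\oplus$ is effectively associative.

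Next, define the auxiliary function
$$f(x)=\inf\bigl\{m/2^n:x\in V(m/2^n),\ m,n\in\omega\bigr\}.$$
Condition (5) gives $f(x)\le 1$ for every $x$, clearly $f(0)=0$, and by the previous step $f\circ gyr[x,y]=f$ for all $x,y$. The central technical step is the inclusion
$$V(m/2^n)\oplus V(k/2^n)\subset V\bigl((m+k)/2^n\bigr)\qquad\text{whenever } m+k\le 2^n,$$
proved by induction on $n$, splitting according to the parities of $m$ and $k$ and using (2), the recursion (4), and the associativity lemma at every reparenthesization. Since $0\in V(t)$ for every dyadic $t>0$, this inclusion yields monotonicity $V(r)\subset V(s)$ for $r\le s$ via $V(r)\subset V(r)\oplus V(s-r)\subset V(s)$, and then subadditivity $f(x\oplus y)\le f(x)+f(y)$ follows by approximating $f(x),f(y)$ by dyadics with a common denominator (the case $f(x)+f(y)>1$ is trivial since $f\le 1$).

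Now set
$$N(x)=\max\bigl\{f(x),\,f(\ominus x)\bigr\}.$$
Then $N(0)=0$ and $N(\ominus x)=N(x)$ are immediate, while the triangle inequality uses the identity $\ominus(x\oplus y)=gyr[x,y](\ominus y\oplus\ominus x)$ (derivable from (G3) and Lemma~\ref{a}) together with the $gyr$-invariance of $f$: one obtains $f(\ominus(x\oplus y))=f(\ominus y\oplus\ominus x)\le f(\ominus y)+f(\ominus x)\le N(x)+N(y)$, and likewise $f(x\oplus y)\le N(x)+N(y)$, so $N(x\oplus y)\le N(x)+N(y)$. Property (a) is inherited from the $gyr$-invariance of $f$. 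For (b): if $N(x)<1/2^n$ then $f(x)<1/2^n$, so by monotonicity $x\in V(r)\subset V(1/2^n)=U_n$ for some dyadic $r<1/2^n$; conversely, if $x\in U_n$ then $\ominus x\in U_n$ by symmetry of $U_n$, whence $f(x),f(\ominus x)\le 1/2^n$ and $N(x)\le 1/2^n\le 2/2^n$.

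The main obstacle is the parity-case induction establishing the key inclusion $V(m/2^n)\oplus V(k/2^n)\subset V((m+k)/2^n)$. It is combinatorially delicate, and the strongly-topological hypothesis is essential at every step: without $gyr$-invariance of all intermediate $V$-sets, the reparenthesizations required to reduce to the inductive hypothesis would introduce uncontrolled gyration factors that would break the argument.
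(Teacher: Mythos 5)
Your setup (gyr-invariance of every $V(m/2^{n})$, and the set-level ``associativity'' $A\oplus(B\oplus C)=(A\oplus B)\oplus C$ for gyr-invariant $C$) is correct and matches how the paper exploits the strongly-topological hypothesis. But the central step of your proposal is wrong, not merely unproved: the two-sided inclusion $V(m/2^{n})\oplus V(k/2^{n})\subset V((m+k)/2^{n})$ does not follow from (1)--(5), and in fact fails already for topological groups. The recursion (4) attaches the new small set $U_{n}$ only on the \emph{left}, so your parity induction breaks exactly in the cases where the odd index sits in the right-hand factor: e.g.\ for $m=2,\ k=1,\ n=2$ the claim reads $U_{1}\oplus U_{2}\subset V(3/4)=U_{2}\oplus U_{1}$, which amounts to commuting the factors. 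A concrete counterexample: in the discrete free group on $a,b$ take $U_{2}=\{e,a^{\pm1}\}$, $U_{1}=\{e,a^{\pm1},a^{\pm2},b^{\pm1}\}$, $U_{0}\supset U_{1}U_{1}$ symmetric, $U_{n}=\{e\}$ for $n\geq 3$; all hypotheses hold, yet $ba\in U_{1}U_{2}\setminus U_{2}U_{1}$. The same example shows that your derived subadditivity $f(x\oplus y)\leq f(x)+f(y)$ is false ($f(b)=1/2$, $f(a)=1/4$, $f(ba)=1$), so $N(x)=\max\{f(x),f(\ominus x)\}$ need not satisfy the triangle inequality, and your proof of (PN2) collapses.

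This is precisely the subtlety the classical Birkhoff--Kakutani argument, and the paper, are designed around: one proves only the one-sided absorption $V(1/2^{n})\oplus V(m/2^{n})\subset V((m+1)/2^{n})$ (which your even--even and odd-index-on-the-left cases do establish, and which also gives the monotonicity $V(r)\subset V(s)$ you need), and then, because $f$ itself is not subadditive, one defines the prenorm as $N(x)=\sup_{y\in G}|f(x\oplus y)-f(y)|$. With that definition, subadditivity follows from the identity $(x\oplus y)\oplus z=x\oplus(y\oplus gyr[y,x](z))$ together with $f\circ gyr[y,x]=f$ (your gyr-invariance observation is exactly what is needed here), and the estimate $U_{n}\subset\{x:N(x)\leq 2/2^{n}\}$ is obtained from the one-sided inclusion by choosing $k$ with $(k-1)/2^{n}\leq f(y)<k/2^{n}$ and bounding $f(x\oplus y)$ and $f((\ominus x)\oplus y)$ by $(k+1)/2^{n}$. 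To repair your write-up you should abandon the strong inclusion and the definition $N=\max\{f,f\circ\ominus\}$ and follow this route.
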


\begin{proof}
 Firstly, we claim that $$V(1/2^{n})\oplus V(m/2^{n})\subset V((m+1)/2^{n}),$$ for all integers $m>0$ and $n\geq 0$. It is obviously true if $m+1>2^{n}$. It remains to consider the case when $m<2^{n}$.

If $n=1$, then the only possible value for $m$ is also $1$, and we have $$V(1/2)\oplus V(1/2)=U_{1}\oplus U_{1}\subset U_{0}=V(1).$$

Assume that the condition holds for some $n\in\omega$. Let us verify it for $n+1$. If $m$ is even, then the condition turns into the formula by means of which $V((2m+1)/2^{n+1})$ was defined.

Assume now that $0<m=2k+1<2^{n+1}$, for some integer $k$. Therefore,
\begin{eqnarray}
V(1/2^{n+1})\oplus V(m/2^{n+1})&=&U_{n+1}\oplus V((2k+1)/2^{n+1})\nonumber\\
&=&U_{n+1}\oplus (U_{n+1}\oplus V(k/2^{n}))\nonumber\\
&=&(U_{n+1}\oplus U_{n+1})\oplus gyr[U_{n+1},U_{n+1}](V(k/2^{n}))\nonumber\\
&\subset&U_{n}\oplus V(k/2^{n})\nonumber\\
&=&V(1/2^{n})\oplus V(k/2^{n}).\nonumber
\end{eqnarray}
By the inductive assumption, we have $$V(1/2^{n})\oplus V(k/2^{n})\subset V((k+1)/2^{n})=V((2k+2)/2^{n+1})=V((m+1)/2^{n+1}).$$

Now we define a real-valued function $f$ on $G$ as follows  $$f(x)=\inf\{r>0:x\in V(r)\}$$ for each $x\in G$. The function $f$ is well-defined, since $x\in V(2)=G$, for each $x\in G$. From the claim above it follows that if $0<r<s$ for positive dyadic rational numbers $r$ and $s$, then $V(r)\subset V(s)$. Moreover, in the argument below, stand only for positive dyadic rational numbers. Hence we have that $x\in V(r)$ whenever $f(x)<r$. Then $f$ is a non-negative function, bounded from above by $2$.

Next, we will show that the function $N$ defined by the formula $$N(x)=sup_{y\in G}|f(x\oplus y)-f(y)|$$ for any $x\in G$, is a prenorm on $G$. We first claim that $N(gyr[x, y](z))=N(z)$, for any $x, y, z\in G$.

Indeed, it need to verify that for any fixed $x, y\in G$ we have, for every $z\in G$, $z\in V(r)$ if and only if $gyr[x,y](z)\in V(r)$, which implies $f(gyr[x,y](z))=f(z)$. For every $z\in G$, if $z\in V(r)$, since $G$ is a strongly topological gyrogroup and $gyr[x, y]$ is an automorphism, we have that $gyr[x, y](V(r))=V(r)$. It follows that $gyr[x, y](z)\in V(r)$.
On the contrary, we assume that $gyr[x,y](z)\in V(r)$. Suppose that $z\not\in V(r)$. However, since $gyr[x,y](V(r))=V(r)$ and $gyr[x,y]$ is an automorphism, $gyr[x,y](z)\not\in V(r)$, which is a contradiction.

Now we can prove that $N$ is a prenorm as follows.

\smallskip
(PN1) Obviously, $N(0)=0$.

\smallskip
(PN2) $N(x\oplus y)\leq N(x)+N(y)$ for any $x, y\in G$.

Indeed, take an arbitrary $x, y\in G$. Then we have $N(x\oplus y)=\sup_{z\in G}|f((x\oplus y)\oplus z)-f(z)|$. Since $f((x\oplus y)\oplus z)=f(x\oplus (y\oplus gyr[y,x](z)))$, we have that
\begin{eqnarray}
f((x\oplus y)\oplus z)-f(z)&=&f(x\oplus (y\oplus gyr[y, x](z)))-f(y\oplus gyr[y, x](z))\nonumber\\
& &+f(y\oplus gyr[y, x](z))-f(gyr[y, x](z))+f(gyr[y, x](z))-f(z).\nonumber
\end{eqnarray}
Therefore, $N(x\oplus y)\leq N(x)+N(y)+N(f(gyr[y,x](z))-f(z))=N(x)+N(y)$ since $f(gyr[y,x](z))=f(z)$.

\smallskip
(PN3) $N(\ominus x)=N(x)$ for any $x\in G$.

Indeed, $N(\ominus x)=\sup_{y\in G}|f((\ominus x)\oplus y)-f(y)|=\sup_{y\in G}|f((\ominus x)\oplus y)-f(x\oplus ((\ominus x)\oplus y))|=N(x)$.

From the above, we obtain that $N$ is a prenorm. Next we shall$$\{x\in G:N(x)<1/2^{n}\}\subset U_{n}\subset\{x\in G:N(x)\leq 2/2^{n}\}$$ for any $n\in\omega$.

It is clear that $f(0)=0$. Assume that $N(x)<1/2^{n}$, for some $x\in G$. Then $f(x)=|f(0\oplus x)-f(0)|\leq N(x)<1/2^{n}$, which implies that $x\in V(1/2^{n})=U_{n}$. This proves that $\{x\in G:N(x)<1/2^{n}\}\subset U_{n}$.

Let $x$ be any point of $V(1/2^{n})$. Obviously, for any point $y\in G$, there exists a positive integer $k$ such that $(k-1)/2^{n}\leq f(y)<k/2^{n}$. Then $y\in V(k/2^{n})$. Since $x\in V(1/2^{n})$ and $\ominus x\in V(1/2^{n})$, it follows that $x\oplus y$ and $(\ominus x)\oplus y$ are in $V(1/2{n})\oplus V(k/2^{n})\subset V((k+1)/2^{n})$. Therefore, $f(x\oplus y)\leq (k+1)/2^{n}$ and $f((\ominus x)\oplus y)\leq (k+1)/2^{n}$. From this and the inequality $(k-1)/2^{n}\leq f(y)$, we obtain: $f(x\oplus y)-f(y)\leq 2/2^{n}$ and $f((\ominus x)\oplus y)-f(y)\leq 2/2^{n}$. Substituting $x\oplus y$ for $y$ in the last inequality, we get: $f(y)-f(x\oplus y)\leq 2/2^{n}$. Hence, we obtain that $|f(x\oplus y)-f(y)|\leq 2/2^{n}$, for each $y\in G$. Therefore, $N(x)\leq 2/2^{n}$.
\end{proof}

\begin{lemma}\label{h}
Let $(G,\tau ,\oplus)$ be a strongly topological gyrogroup. If $H$ is a compact subgyrogroup which has a countable character in $G$, then there exists a compact $L$-subgyrogroup $P\subset H$ such that $P$ has a countable character in $G$ and the quotient space $G/P$ is metrizable.
\end{lemma}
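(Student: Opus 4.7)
The plan is to build $P$ as the intersection of a carefully chosen decreasing sequence of symmetric, gyroinvariant neighborhoods of $0$ and then manufacture a compatible metric on $G/P$ from a prenorm produced by Lemma~\ref{s}. First, let $\mathscr U$ be the symmetric gyroinvariant neighborhood base at $0$ witnessing that $G$ is a strongly topological gyrogroup, and let $\{W_n:n\in\omega\}$ be a countable base for $G$ at $H$. Recursively pick symmetric $U_n\in\mathscr U$ with $U_0\subset W_0$ and $U_{n+1}\oplus U_{n+1}\subset U_n\cap W_n$; the existence of each $U_{n+1}$ comes from Lemma~\ref{m}(1) followed by shrinking inside $\mathscr U$. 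Applying Lemma~\ref{b} with $F:=H$ and the given base $\{W_n\}$ then gives $P:=\bigcap_{n\in\omega}U_n$, a compact $L$-subgyrogroup of $G$ contained in $H$, with $\{U_n\}$ forming a base for $G$ at $P$; in particular $P$ has countable character in $G$.

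Next, I would feed the sequence $\{U_n\}$ into Lemma~\ref{s} to produce a prenorm $N$ on $G$ satisfying the gyroinvariance $N(gyr[x,y](z))=N(z)$ and the sandwich $\{x\in G:N(x)<1/2^n\}\subset U_n\subset\{x\in G:N(x)\leq 2/2^n\}$ for every $n\in\omega$. Because $\{U_n\}$ is a base for $G$ at $P$, these inclusions simultaneously identify $N^{-1}(0)=\bigcap_n U_n=P$ and show that $N$ is continuous at $0$.

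Setting $\rho(x,y):=N((\ominus x)\oplus y)$, I would verify that $\rho$ is a left-invariant continuous pseudometric on $G$. The key gyrogroup computation, using Lemma~\ref{a}(4) together with $gyr[x,\ominus x]=\mathrm{id}$, is $\ominus(a\oplus x)\oplus(a\oplus y)=gyr[a,x]((\ominus x)\oplus y)$, so clause (a) of Lemma~\ref{s} yields $\rho(a\oplus x,a\oplus y)=\rho(x,y)$. Combined with (PN2), (PN3), the cancellation $y\oplus((\ominus y)\oplus z)=z$ (a consequence of $(G3)$, $(G4)$, Lemma~\ref{a}(1), and $gyr[y,\ominus y]=\mathrm{id}$), and the rewriting $(\ominus x)\oplus z=((\ominus x)\oplus y)\oplus gyr[\ominus x,y]((\ominus y)\oplus z)$, these facts yield symmetry and the triangle inequality for $\rho$. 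Moreover $\rho(x,y)=0$ iff $(\ominus x)\oplus y\in P$ iff $y\in x\oplus P$ iff $\varphi(x)=\varphi(y)$, so $\rho$ descends to a well-defined metric $d$ on $G/P$ via $d(\varphi(x),\varphi(y)):=\rho(x,y)$.

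Finally, to identify the metric topology of $d$ with $\tau(G/P)$, I would observe that openness and continuity of $\varphi$ (Theorem~\ref{t00000}) combined with the sandwich inclusions make $\{\varphi(U_n):n\in\omega\}$ a base for $G/P$ at $\varphi(0)$ that is cofinal with the open $d$-balls around $\varphi(0)$; left invariance of $\rho$ then propagates this coincidence to every point of $G/P$. The main obstacle is pinpointing and absorbing the $gyr$-correction terms introduced by the gyroassociative law $(G3)$ at every step where, in the classical Birkhoff--Kakutani argument for topological groups, one would simply invoke associativity; the point is precisely that clause (a) of Lemma~\ref{s} makes $N$ invariant under all these corrections, so they disappear whenever they are measured by $N$.
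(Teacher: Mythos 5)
Your proposal is correct and follows essentially the same route as the paper: you construct $P$ exactly as the paper does via Lemma~\ref{b}, invoke Lemma~\ref{s} for the gyroinvariant prenorm $N$, and then metrize $G/P$ by the same Birkhoff--Kakutani-style argument identifying the metric topology with the quotient topology through the sandwich inclusions and the base $\{U_n\}$ at $P$. The only cosmetic difference is that you use the one-sided quantity $\rho(x,y)=N((\ominus x)\oplus y)$ and recover its symmetry from $\ominus((\ominus x)\oplus y)=gyr[\ominus x,y]((\ominus y)\oplus x)$ together with the gyroinvariance of $N$, whereas the paper builds symmetry in by defining $\varrho(\pi_P(x),\pi_P(y))=N((\ominus x)\oplus y)+N((\ominus y)\oplus x)$; the triangle-inequality step via $(\ominus x)\oplus z=((\ominus x)\oplus y)\oplus gyr[\ominus x,y]((\ominus y)\oplus z)$ is identical in both.
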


\begin{proof}
Let $\mathscr{U}$ be a symmetric neighborhood base at $0$ such that $gyr[x, y](U)=U$ for any $x, y\in G$ and $U\in\mathscr{U}$. Suppose that $\{U_{n}:n\in \omega \}$ is a countable base for $G$ at $H$. We define by induction a sequence $\{V_{n}: n\in \omega\}$ of open symmetric neighborhoods of the identity $0$ in $G$ such that $V_{n}\in\mathscr{U}$ and $V_{n+1}\oplus V_{n+1}\subset V_{n}\cap U_{n}$ for each $n\in \omega$. Put $P=\bigcap _{n\in \omega}V_{n}$. Then, by Lemma \ref{b}, $P$ is a compact $L$-subgyrogroup of $G$, $P\subset H$, and $\{V_{n}: n\in \omega \}$ is a countable base of $G$ at $P$. Let $\pi _{P}: G\rightarrow G/P$ be the quotient mapping of $G$ onto the left coset space $G/P$.

Apply Lemma \ref{s} to choose a continuous prenorm $N$ on $G$ which satisfies $$N(gyr[x,y](z))=N(z)$$ for any $x, y, z\in G$ and $$\{x\in G: N(x)<1/2^{n}\}\subset V_{n}\subset \{x\in G: N(x)\leq 2/2^{n}\},$$ for each integer $n\geq 0$.
It is clear that $N(x)=0$ if and only if $x\in P$.

We claim that $N(x\oplus p)=N(x)$ for every $x\in G$ and $p\in P$. Indeed, for every $x\in G$ and $p\in P$, $N(x\oplus p)\leq N(x)+N(p)=N(x)+0=N(x)$.  Moreover, by the definition of $N$, we observe that $N(gyr[x, y](z))=N(z)$ for every $x, y, z\in G$. Since $H$ is a $L$-subgyrogroup, it follows from Lemma~\ref{a} that
\begin{eqnarray}
N(x)&=&N((x\oplus p)\oplus gyr[x,p](\ominus p))\nonumber\\
&\leq&N(x\oplus p)+N(gyr[x,p](\ominus p))\nonumber\\
&=&N(x\oplus p)+N(\ominus p)\nonumber\\
&=&N(x\oplus p).\nonumber
\end{eqnarray}
Therefore, $N(x\oplus p)=N(x)$ for every $x\in G$ and $p\in P$.

Now define a function $d$ from $G\times G$ to $\mathbb{R}$ by $d(x,y)=|N(x)-N(y)|$ for all $x,y\in G$. Obviously, $d$ is continuous. We show that $d$ is a pseudometric.

\smallskip
(1) For any $x, y\in G$, if $x=y$, then $d(x, y)=|N(0)-N(0)|=0$.

\smallskip
(2) For any $x, y\in G$, $d(y, x)=|N(y)-N(x)|=|N(x)-N(y)|=d(x, y)$.

\smallskip
(3) For any $x, y, z\in G$, we have
\begin{eqnarray}
d(x, y)&=&|N(x)-N(y)|\nonumber\\
&=&|N(x)-N(z)+N(z)-N(y)|\nonumber\\
&\leq&|N(x)-N(z)|+|N(z)-N(y)|\nonumber\\
&=&d(x, z)+d(z, y).\nonumber
\end{eqnarray}

If $x'\in x\oplus P$ and $y'\in y\oplus P$, there exist $p_{1},p_{2}\in P$ such that $x'=x\oplus p_{1}$ and $y'=y\oplus p_{2}$, then $$d(x', y')=|N(x\oplus p_{1})-N(y\oplus p_{2})|=|N(x)-N(y)|=d(x, y).$$ This enables us to define a function $\varrho $ on $G/P\times G/P$ by $$\varrho (\pi _{p}(x),\pi _{p}(y))=d(\ominus x\oplus y, 0)+d(\ominus y\oplus x, 0)$$ for any $x, y\in G$.

It is obvious that $\varrho $ is continuous, and we verify that $\varrho $ is a metric on $Y=G/P$.

\smallskip
(1) Obviously, for any $x, y\in G$, then
\begin{eqnarray}
\varrho (\pi _{P}(x),\pi _{P}(y))=0&\Leftrightarrow&d(\ominus x\oplus y, 0)=d(\ominus y\oplus x, 0)=0\nonumber\\
&\Leftrightarrow&N(\ominus x\oplus y)=N(\ominus y\oplus x)=0\nonumber\\
&\Leftrightarrow&\ominus x\oplus y\in P\ \mbox{and}\ \ominus y\oplus x\in P\nonumber\\
&\Leftrightarrow&y\in x+P\ \mbox{and}\ x\in y+P\nonumber\\
&\Leftrightarrow&\pi _{P}(x)=\pi _{P}(y).\nonumber
\end{eqnarray}

\smallskip
(2) For every $x,y\in G$, it is obvious that $\varrho (\pi _{P}(y), \pi _{P}(x))=\varrho (\pi _{P}(x),\pi _{P}(y))$.

\smallskip
(3) For every $x, y, z\in G$, it follows from \cite[Theorem 2.11]{UA2005} that
\begin{eqnarray}
\varrho (\pi _{P}(x),\pi _{P}(y))&=&N(\ominus x\oplus y)+N(\ominus y\oplus x)\nonumber\\
&=&N((\ominus x\oplus z)\oplus gyr[\ominus x,z](\ominus z\oplus y))\nonumber\\
&&+N((\ominus y\oplus z)\oplus gyr[\ominus y,z](\ominus z\oplus x))\nonumber\\
&\leq&N(\ominus x\oplus z)+N(gyr[\ominus x,z](\ominus z\oplus y))\nonumber\\
&&+N(\ominus y\oplus z)+N(gyr[\ominus y,z](\ominus z\oplus x))\nonumber\\
&=&N(\ominus x\oplus z)+N(\ominus z\oplus y)+N(\ominus y\oplus z)+N(\ominus z\oplus x)\nonumber\\
&=&d(\ominus x\oplus z, 0)+d(\ominus z\oplus x, 0)+d(\ominus z\oplus y, 0)+d(\ominus y\oplus z, 0)\nonumber\\
&=&\varrho (\pi _{P}(x),\pi _{P}(z))+\varrho (\pi _{P}(z),\pi _{P}(y)).\nonumber
\end{eqnarray}

Let us verify that $\varrho$ generates the quotient topology of the space $Y$. Given any points $x\in G$, $y\in Y$ and any $\varepsilon>0$, we define open balls, $$B(x, \varepsilon)=\{x'\in G: d(x',x)<\varepsilon\}$$ and $$B^{*}(y, \varepsilon)=\{y'\in G/P: \varrho (y',y)<\varepsilon\}$$ in $X$ and $Y$, respectively. Obviously, if $x\in G$ and $y=\pi _{P}(x)$, then we have $B(x, \varepsilon)=\pi ^{-1}_{P}(B^{*}(y, \varepsilon))$. Therefore, the topology generated by $\varrho$ on $Y$ is coarser than the quotient topology. Suppose that the preimage $O=\pi ^{-1}_{P}(W)$ is open in $G$, where $W$ is a non-empty subset of $Y$. For every $y\in W$, we have $\pi ^{-1}_{P}(y)=x\oplus P\subset O$, where $x$ is an arbitrary point of the fiber $\pi ^{-1}_{P}(y)$. Since $\{V_{n}: n\in \omega\}$ is a base for $G$ at $P$, there exists $n\in \omega$ such that $x\oplus V_{n}\subset O$. Then there exists $\delta>0$ such that $B(x, \delta)\subset x\oplus V_{n}$. Therefore, we have $\pi ^{-1}_{P}(B^{*}(y, \delta))=B(x, \delta)\subset x\oplus V_{n}\subset O$. It follows that $B^{*}(y, \delta)\subset W$. So the set $W$ is the union of a family of open balls in $(Y, \varrho)$. Hence, $W$ is open in $(Y, \varrho)$, which proves that the metric and quotient topologies on $Y=G/P$ coincide.
\end{proof}

The following theorem gives a characterization for a strongly topological gyrogroup being a feathered space.

\begin{theorem}\label{i}
A strongly topological gyrogroup $(G, \tau, \oplus)$ is feathered if and only if it contains a compact $L$-subgyrogroup $H$ such that the quotient space $G/H$ is metrizable.
\end{theorem}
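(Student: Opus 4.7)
The plan is to prove both implications by combining the structural results already established in this section with the standard fact that perfect maps pull back local bases at points to bases at the compact fibres.

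For the forward direction, suppose $G$ is a feathered strongly topological gyrogroup. Theorem~\ref{g} (applied with $O = G$) produces a compact $L$-subgyrogroup $K$ of $G$ of countable character. Applying Lemma~\ref{h} to $K$, viewed as a compact subgyrogroup of countable character, then yields a compact $L$-subgyrogroup $H \subset K$ such that $H$ itself has countable character in $G$ and the quotient space $G/H$ is metrizable; this $H$ is the required subgyrogroup. So this direction reduces to a direct chaining of the two preceding results.

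For the reverse direction, suppose $H$ is a compact $L$-subgyrogroup with $G/H$ metrizable, and let $\varphi : G \to G/H$ be the canonical quotient map. By Theorem~\ref{f}, $\varphi$ is perfect, and in particular closed. Since $G/H$ is metrizable, the identity coset $\varphi(0)$ admits a countable local base $\{W_n : n \in \omega\}$ in $G/H$. Setting $U_n = \varphi^{-1}(W_n)$ produces a countable family of open neighborhoods of $H$ in $G$, and I claim $\{U_n : n \in \omega\}$ is a base for $G$ at $H$. Once this is shown, $H$ is compact and of countable character in $G$, so $G$ is feathered by definition.

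The only nontrivial step is the verification of this claim, and that is where I expect the main (though still mild) obstacle to lie. Given any open $V \supset H$ in $G$, observe first that $\varphi^{-1}(\varphi(H)) = H$, since $H$ is a subgyrogroup and $0 \oplus H = H$; hence $\varphi(0) \notin \varphi(G \setminus V)$. Closedness of $\varphi$ makes $\varphi(G \setminus V)$ closed in $G/H$, so some basic neighborhood $W_n$ of $\varphi(0)$ is disjoint from it. Using the saturation identity $\varphi^{-1}(W_n) \cap (G \setminus V) = \emptyset$ if and only if $W_n \cap \varphi(G \setminus V) = \emptyset$, this disjointness forces $U_n \subset V$, completing the verification and the proof.
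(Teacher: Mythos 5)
Your proposal is correct and follows essentially the same route as the paper: the forward direction chains Theorem~\ref{g} with Lemma~\ref{h}, and the converse pulls back a countable base at $\varphi(0)$ through the perfect (closed) quotient map of Theorem~\ref{f} to get countable character of the compact $H$ in $G$. Your version is in fact slightly more careful than the paper's in the forward direction, where you note that Lemma~\ref{h} yields a possibly smaller $L$-subgyrogroup $H\subset K$ rather than asserting metrizability of $G/K$ directly.
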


\begin{proof}
Let $\varphi$ be the natural homomorphism from $G$ to $G/H$. Suppose that $G$ contains a compact $L$-subgyrogroup $H$ such that the quotient space $G/H$ is metrizable. Then, by Theorem \ref{f}, the mapping $\varphi: G\rightarrow G/H$ is perfect. We now claim that the subgyrogroup $H\subset G$ has countable character in $G$. Indeed, let $\{U_{n}:n\in \omega\}$ be a countable base for $G/H$ at the point $\varphi(0)$. Then the family $\{V_{n}:n\in \omega\}$ is a base for $G$ at $H$, where $V_{n}=\varphi^{-1}(U_{n})$ for each $n\in \omega$. Indeed, for every open neighborhood $O$ of $H$ in $G$, $F=G\backslash O$ is closed in $G$ and the image $\varphi(F)$ is a closed subset of $G/H$ which does not contain $\varphi(0)$. Hence, $U_{n}\bigcap \varphi(F)=\emptyset$ for some $n\in \omega$, which in turn implies that $V_{n}\bigcap F=\emptyset$. Therefore, $H\subset V_{n}\subset O$. Therefore, $\chi (H, G)\leq \omega$. Since $H$ is compact, the gyrogroup $G$ is feathered.

Conversely, if the strongly topological gyrogroup $G$ is feathered, then it contains a compact $L$-subgyrogroup $H$ of countable character in $G$ by Theorem \ref{g}. Hence it follows from Theorem \ref{h} that the quotient space $G/H$ is metrizable.
\end{proof}

The following two corollaries give partial answers to Questions~\ref{kk} and~\ref{jj}, respectively.

\begin{corollary}\label{l}
Every feathered strongly topological gyrogroup is paracompact.
\end{corollary}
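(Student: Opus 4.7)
The plan is to combine Theorem \ref{i}, Theorem \ref{f}, and the classical fact from general topology that perfect preimages of paracompact spaces are paracompact (see, e.g., Engelking's theorem 5.1.35). None of the work lies in the gyrogroup theory itself at this stage; everything has already been prepared in the preceding lemmas.

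First I would invoke Theorem \ref{i}: since $(G,\tau,\oplus)$ is a feathered strongly topological gyrogroup, there exists a compact $L$-subgyrogroup $H$ of $G$ such that the quotient space $G/H$ is metrizable. In particular, $G/H$ is paracompact, because every metrizable space is paracompact. Next I would apply Theorem \ref{f} to the same compact $L$-subgyrogroup $H$: the natural quotient map $\varphi\colon G\to G/H$ is perfect.

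At this point the only remaining ingredient is the preservation result for paracompactness under perfect preimages: if $f\colon X\to Y$ is a perfect mapping onto a paracompact space $Y$, then $X$ itself is paracompact. This is a standard theorem of Engelking \cite{E}. Applying it to $\varphi\colon G\to G/H$ immediately yields that $G$ is paracompact, which is the desired conclusion.

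The main (and essentially only) obstacle is making sure the cited preservation theorem is applicable in the $T_1$ setting assumed here; since topological gyrogroups are Tychonoff (being rectifiable spaces), the classical statement applies without modification, so the proof reduces to a short invocation of the three results listed above.
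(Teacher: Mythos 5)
Your proposal is correct and follows essentially the same route as the paper's proof: apply Theorem \ref{i} to obtain a compact $L$-subgyrogroup $H$ with $G/H$ metrizable (hence paracompact), use Theorem \ref{f} to see that $\varphi\colon G\to G/H$ is perfect, and invoke the inverse invariance of paracompactness under perfect mappings.
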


\begin{proof}
Let $(G, \tau, \oplus)$ be a feathered strongly topological gyrogroup. By Theorem \ref{i}, $G$ contains a compact $L$-subgyrogroup $H$ such that the left coset space $G/H$ is metrizable. Moreover, it follows from Theorem~\ref{f} that the quotient mapping $\varphi: G\rightarrow G/H$ is perfect. Then since paracompactness is an inverse invariant under perfect mappings, it follows that $G$ is paracompact.
\end{proof}

\begin{corollary}
Every feathered strongly topological gyrogroup is a $D$-space.
\end{corollary}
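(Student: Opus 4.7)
The plan is immediate from what has already been established. By Corollary~\ref{l}, every feathered strongly topological gyrogroup $G$ is paracompact, so only the step ``paracompact $+$ feathered $\Rightarrow$ $D$-space'' is left to explain.

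First, I would recall that a space is a \emph{$p$-space} in the sense of Arhangel'ski\u{\i} if and only if it admits a perfect mapping onto a metrizable space, and that in the class of paracompact spaces the $p$-space property coincides with being feathered (this is the classical Arhangel'ski\u{\i} characterization: a paracompact Hausdorff space is a $p$-space iff it contains a non-empty compact subset of countable character, iff it is the perfect preimage of a metrizable space). In our situation both ingredients are already in hand: Theorem~\ref{i} provides a compact $L$-subgyrogroup $H\leq_L G$ with $G/H$ metrizable, Theorem~\ref{f} shows that the quotient map $\varphi:G\to G/H$ is perfect, and Corollary~\ref{l} gives paracompactness of $G$. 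Hence $G$ is a paracompact $p$-space.

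Next, I would invoke the well-known theorem (noted in the paragraph preceding Question~\ref{kk}) that every paracompact $p$-space is a $D$-space; this is a standard consequence of Borges--Wehrly type results for spaces which are perfect preimages of metrizable spaces, using that metrizable spaces are $D$-spaces and that the $D$-space property is preserved by perfect preimages of $T_1$ $D$-spaces (with the paracompactness of $G$ ensuring the necessary covering-theoretic input). Combining these two facts, $G$ is a $D$-space.

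There is essentially no obstacle here, since the heavy lifting was done in Theorems~\ref{g}, \ref{h}, \ref{i} and Corollary~\ref{l}; the only care one must take is to cite the correct reference for ``paracompact $p$-space $\Rightarrow$ $D$-space'' (for instance the theorem of Borges and Wehrly, or the version recorded in Arhangel'ski\u{\i}'s survey \cite{AA1}), and to point out that Corollary~\ref{l} together with the feathered hypothesis places $G$ in precisely that class. The whole argument therefore reduces to a short two-line deduction from Corollary~\ref{l} and a cited theorem on $D$-spaces.
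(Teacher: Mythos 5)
Your argument is correct and is exactly the intended one: the paper leaves this corollary as an immediate consequence of Corollary~\ref{l} together with the fact (stated before Question~\ref{kk}) that every paracompact $p$-space is a $D$-space, with Theorems~\ref{f} and~\ref{i} supplying the perfect map onto a metrizable space that makes $G$ a paracompact $p$-space. Your only superfluous remark is the parenthetical suggesting paracompactness is needed for the perfect-preimage step; the Borges--Wehrly inverse-invariance of the $D$-property under perfect maps does not require it, but this does not affect the proof.
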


A Tychonoff space $X$ is called $\check{C}$ech-complete if $X$ is homeomorphic to a $G_{\delta}$-set in a compact space. The class of $\check{C}$ech-complete spaces is stable with respect to taking closed subspaces, countable products and perfect images \cite{E}. A $\check{C}$ech-complete gyrogroup is a topological gyrogroup whose underlying space is $\check{C}$ech-complete. Moreover, all $\check{C}$ech-complete gyrogroups are feathered.

\begin{theorem}
A strongly topological gyrogroup $(G, \tau, \oplus)$ is $\check{C}$ech-complete if and only if it contains a compact $L$-subgyrogroup $H$ such that the quotient space $G/H$ is metrizable by a complete metric.
\end{theorem}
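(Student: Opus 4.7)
The plan is to mimic the proof of Theorem~\ref{i} by upgrading ``metrizable'' to ``completely metrizable'' on both sides, using perfectness of the quotient map together with the fact that \v{C}ech-completeness is both preserved and reflected by perfect mappings between Tychonoff spaces (a standard result from \cite{E}), and that a metrizable space is \v{C}ech-complete if and only if it admits a complete metric.

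For the forward direction, I would first note that, as remarked just before the statement, every \v{C}ech-complete gyrogroup is feathered. Hence Theorem~\ref{i} supplies a compact $L$-subgyrogroup $H\subset G$ for which the coset space $G/H$ is metrizable, and by Theorem~\ref{f} the quotient mapping $\varphi\colon G\to G/H$ is perfect. Since the class of \v{C}ech-complete spaces is stable under perfect images, $G/H$ inherits \v{C}ech-completeness from $G$. A metrizable \v{C}ech-complete space is completely metrizable, so the topology of $G/H$ is generated by a complete metric, as desired.

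For the reverse direction, suppose $G$ contains a compact $L$-subgyrogroup $H$ with $G/H$ completely metrizable. Then $G/H$ is \v{C}ech-complete (any completely metrizable space embeds as a $G_\delta$ in its Stone--\v{C}ech compactification). Theorem~\ref{f} again gives that $\varphi\colon G\to G/H$ is perfect, and since \v{C}ech-completeness is also an inverse invariant under perfect mappings, $G$ itself is \v{C}ech-complete.

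There is essentially no serious obstacle once the structural theorems (Theorems~\ref{f} and \ref{i}) are in place: the only external input is the two-sided invariance of \v{C}ech-completeness under perfect maps, which is a classical fact from general topology and requires no gyrogroup-specific argument. The main thing to be careful about is to explicitly invoke Theorem~\ref{i} to extract the compact $L$-subgyrogroup (rather than trying to rerun the construction from Lemmas~\ref{b}, \ref{s}, and \ref{h}), so that the gyrogroup-theoretic content is cleanly separated from the purely topological step.
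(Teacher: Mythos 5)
Your proposal is correct and follows essentially the same route as the paper: both directions rest on the perfectness of the quotient map (Theorem~\ref{f}), the feathered case giving a compact $L$-subgyrogroup with metrizable quotient (you cite Theorem~\ref{i}, the paper re-invokes Theorem~\ref{g} and Lemma~\ref{h}, which are exactly the ingredients of Theorem~\ref{i}), and the classical facts that \v{C}ech-completeness is preserved and reflected by perfect maps and coincides with complete metrizability among metrizable spaces.
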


\begin{proof}
Suppose that the strongly topological gyrogroup $G$ is $\check{C}$ech-complete. Then $G$ is feathered, thus Theorem \ref{g} implies that $G$ contains a compact $L$-subgyrogroup $H$ of countable character in $G$. Furthermore, the quotient mapping $\varphi: G\rightarrow G/H$ is perfect by Theorem~\ref{f}, then it follows from Theorem \ref{h} that the left coset space $G/H$ is metrizable. Since $G$ is a $\check{C}$ech-complete space, it follows from \cite[Theorem 3.9.10]{E} that $G/H$ is $\check{C}$ech-complete, then $G/H$ is completely metrizable by \cite[Theorem 4.3.26]{E}.

Conversely, if the strongly topological gyrogroup $G$ contains a compact $L$-subgyrogroup $H$ such that the left coset space $G/H$ is metrizable by a complete metric, then $G/H$ is $\check{C}$ech-complete by \cite[Theorem 4.3.26]{E}. Since the quotient mapping $\varphi: G\rightarrow G/H$ is perfect, and $\check{C}$ech-completeness is an inverse invariant under perfect mappings by \cite[Theorem 3.9.10]{E}, we have shown that the space $G$ is $\check{C}$ech-complete.
\end{proof}

\begin{corollary}
Every $\check{C}$ech-complete strongly topological gyrogroup is paracompact.
\end{corollary}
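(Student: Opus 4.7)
The plan is to observe that this corollary follows almost immediately from the preceding Čech-complete characterization theorem together with Corollary~\ref{l}. The whole point of the preceding development is that feathering is the hypothesis that makes the perfect-quotient machinery work, and Čech-completeness is just a stronger condition.

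First I would invoke the remark already made in the text that every Čech-complete gyrogroup is feathered; this reduces the statement directly to Corollary~\ref{l}, which asserts that every feathered strongly topological gyrogroup is paracompact. To make the argument self-contained, I would alternatively run through the same route used in the proof of Corollary~\ref{l}: apply the preceding theorem to obtain a compact $L$-subgyrogroup $H\subset G$ such that $G/H$ is metrizable by a complete metric. In particular $G/H$ is metrizable, hence paracompact.

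Next I would note that by Theorem~\ref{f} the quotient map $\varphi:G\to G/H$ is perfect. Since paracompactness is preserved under perfect preimages (this is the same standard fact invoked in the proof of Corollary~\ref{l}), it follows that $G$ itself is paracompact.

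There is no real obstacle here; the corollary is essentially a packaging of the Čech-complete characterization with Corollary~\ref{l}, and the only thing to verify is that the perfect-mapping/paracompactness implication applies verbatim. The proof will be a single short paragraph citing the Čech-complete theorem, Theorem~\ref{f}, and the inverse invariance of paracompactness under perfect maps.
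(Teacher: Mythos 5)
Your proposal is correct and follows exactly the route the paper intends: the paper states that all \v{C}ech-complete gyrogroups are feathered, so the corollary is an immediate consequence of Corollary~\ref{l} (equivalently, of the \v{C}ech-complete characterization theorem plus Theorem~\ref{f} and inverse invariance of paracompactness under perfect maps). Nothing further is needed.
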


\smallskip
\section{NSS-gyrogroups and $P$-gyrogroups}
In this section, we mainly discuss the NSS-gyrogroups and $P$-gyrogroups, and prove that every locally compact $NSS$-gyrogroup is first-countable and every Lindel$\ddot{o}$f $P$-gyrogroup is Ra$\breve{\i}$kov complete. Let $(G, \tau, \oplus)$ be a topological gyrogroup. We say that $G$ is a gyrogroup with {\it no small subgyrogroups} or, for brevity, an $NSS$-gyrogroup if there exists a neighborhood $V$ of the neutral element $0$ such that every subgyrogroup $H$ of $G$ contained in $V$ is trivial, that is, $H=\{0\}$.

It is well known that each topological group with a countable pseudocharacter is submetrizable. Therefore, we have the question.

\begin{question}
Let $(G, \tau, \oplus)$ be a topological gyrogroup with a countable pseudocharacter. Is $G$ submetrizable?
\end{question}

This question is still unknown for us. However, we have Theorem~\ref{x}. First, we prove a lemma.

\begin{lemma}\label{x2}
Let $(G, \tau, \oplus)$ be a topological gyrogroup, and let $F$ be a non-empty compact $G_{\delta}$-set in $G$. Then there exists a subgyrogroup $P$ such that $P$ is a $G_{\delta}$-set in $G$, $0\in P$ and $F\oplus P\subset F$.
\end{lemma}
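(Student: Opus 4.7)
The plan is to mimic the standard construction used for topological groups: build a decreasing sequence $\{V_n\}_{n\in\omega}$ of symmetric open neighborhoods of $0$ whose intersection is a subgyrogroup by Theorem~\ref{ttt}, and which are simultaneously small enough that each translate $F\oplus V_n$ stays inside an open set $O_n$ from a $G_{\delta}$-presentation of $F$.

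First, since $F$ is a $G_{\delta}$-set I would write $F=\bigcap_{n\in\omega}O_n$ with each $O_n$ open in $G$. For every $n$, the usual compact tube-lemma argument applied to the jointly continuous map $\oplus\colon G\times G\to G$ at the points $(x,0)$ for $x\in F$ (covering $F$ by finitely many product-open pieces and intersecting the resulting neighborhoods of $0$) produces an open neighborhood of $0$ whose left-translate by $F$ lies in $O_n$; Lemma~\ref{m}(2) lets me shrink this neighborhood to a symmetric open one $W_n$ with $F\oplus W_n\subset O_n$.

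Next, I would inductively define symmetric open neighborhoods $V_n$ of $0$ using Lemma~\ref{m}(1), starting with a symmetric open $V_0\subset W_0$ and choosing, at each stage, $V_{n+1}$ symmetric open with
\begin{equation*}
V_{n+1}\oplus V_{n+1}\subset V_n\cap W_{n+1}.
\end{equation*}
The family $\mathscr{U}=\{V_n:n\in\omega\}$ then satisfies condition $a)$ of Theorem~\ref{ttt} by construction, and condition $b)$ holds because each $V_n$ is symmetric. Hence $P:=\bigcap_{n\in\omega}V_n=\bigcap\mathscr{U}$ is a closed subgyrogroup of $G$ by Theorem~\ref{ttt}, it contains $0$ since every $V_n$ does, and it is a $G_{\delta}$-set as a countable intersection of open sets.

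Finally, since $0\in V_{n+1}$ gives $V_{n+1}=0\oplus V_{n+1}\subset V_{n+1}\oplus V_{n+1}\subset W_{n+1}$, and also $V_0\subset W_0$, we obtain $P\subset V_n\subset W_n$ for every $n$, whence
\begin{equation*}
F\oplus P\subset F\oplus W_n\subset O_n\qquad\text{for every } n\in\omega,
\end{equation*}
so $F\oplus P\subset\bigcap_{n\in\omega}O_n=F$, as required. I expect no substantive obstacle: the one place that needs care is the tube-lemma step producing the $W_n$, but this uses only joint continuity of $\oplus$ (built into the definition of a topological gyrogroup) and compactness of $F$, so the classical argument transfers verbatim; the gyrogroup-specific subtleties (non-associativity, gyrations) do not intervene here because Theorem~\ref{ttt} handles the algebraic closure of $P$ abstractly.
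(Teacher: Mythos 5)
Your proposal is correct and follows essentially the same route as the paper: both build a decreasing sequence of symmetric open neighborhoods $V_n$ of $0$ with $V_{n+1}\oplus V_{n+1}\subset V_n$ and $F\oplus V_n$ inside the $n$-th open set of the $G_{\delta}$-presentation, and take $P=\bigcap_n V_n$. The only cosmetic differences are that the paper obtains the neighborhoods with $F\oplus V_n\subset U_n$ by citing an external lemma rather than running the tube-lemma argument explicitly as you do, and it verifies that $P$ is a closed subgyrogroup by a direct check (with Theorem~\ref{t29}) instead of invoking Theorem~\ref{ttt}.
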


\begin{proof}
Let $F=\bigcap_{n=1}^{\infty} U_{n}$, where $U_{n}$ is open and $U_{n+1}\subset U_{n}$ for each $n\in\mathbb{N}$. By induction, it follows from \cite[Lemma 2.3]{TL} that there exists a sequence $\{V_{n}: n\in\mathbb{N}\}$ of open neighborhoods of $0$ such that the following conditions hold:

(1) $F\oplus V_{n}\subset U_{n}$ for each $n\in\mathbb{N}$;

(2) $\ominus V_{n}=V_{n}$ for each $n\in\mathbb{N}$;

(3) $V_{n+1}\oplus V_{n+1}\subset V_{n}$ for each $n\in\mathbb{N}$.

Let $P=\bigcap_{n=1}^{\infty}V_{n}$. Obviously, $0\in P$, $P$ is a $G_{\delta}$-set and $F\oplus P\subset F$. It suffices to prove that $P$ is a closed subgyrogroup. By Theorem~\ref{t29}, $P$ is closed by (3) above. We only prove that $P$ is a subgyrogroup. Indeed, take arbitrary $x, y\in P$. Then $x, y\in V_{n}$ for each $n\in \mathbb{N}$, then $x\oplus y\in V_{n}$ and $\ominus x\in V_{n}$ since $V_{n+1}\oplus V_{n+1}\subset V_{n}$ and $\ominus V_{n}=V_{n}$. Therefore, $x\oplus y\in P$ and $\ominus x\in P$. By the arbitrary, we see that $P$ is a subgyrogroup.
\end{proof}

\begin{theorem}\label{x}
If a topological gyrogroup $(G, \tau, \oplus)$ is an $NSS$-gyrogroup, then the following two conditions are equivalent:

(1) there exists a non-empty compact $G_{\delta}$-set $F$ in $G$;

(2) the identity element $0$ of $G$ is a $G_{\delta}$-point in $G$.
\end{theorem}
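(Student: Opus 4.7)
The plan is to establish the two directions separately, with the substantive work concentrated in $(1)\Rightarrow(2)$ and the reverse being immediate.

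For $(2)\Rightarrow(1)$, I would simply take $F=\{0\}$. The identity $0$ is a $G_\delta$-point by hypothesis, and a singleton is compact, so $F$ is the desired non-empty compact $G_\delta$-set. No use of the $NSS$ property is needed for this direction.

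For $(1)\Rightarrow(2)$, let $V$ be an open neighborhood of $0$ witnessing the $NSS$ property, i.e., every subgyrogroup of $G$ contained in $V$ is trivial. The idea is to apply Lemma~\ref{x2} to the compact $G_\delta$-set $F$ to produce a subgyrogroup $P$ that is itself a $G_\delta$-set containing $0$, and then force $P$ to sit inside $V$ so that the $NSS$ property collapses $P$ to $\{0\}$. Concretely, I would re-run the inductive construction of the sequence $\{V_n:n\in\mathbb{N}\}$ in the proof of Lemma~\ref{x2}, but with the additional harmless requirement that $V_1\subset V$; this is possible because at the first step one only needs $V_1$ to be an open symmetric neighborhood of $0$ with $F\oplus V_1\subset U_1$, and by the compactness of $F$ together with the openness of $U_1\cap V$ such a $V_1$ exists inside $V$. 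The rest of the induction (giving $\ominus V_n=V_n$ and $V_{n+1}\oplus V_{n+1}\subset V_n\cap U_n$) goes through unchanged, and the resulting $P=\bigcap_n V_n$ is still a subgyrogroup that is a $G_\delta$-set containing $0$, with the extra property $P\subset V_1\subset V$.

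Once this is in hand, the $NSS$ property forces $P=\{0\}$, and since $P$ is a $G_\delta$-set, so is $\{0\}$, which is exactly condition $(2)$. The only delicate point, and thus the main obstacle, is verifying that the construction of $V_1$ inside $V$ respects the condition $F\oplus V_1\subset U_1$ — this is a standard compactness-of-$F$ argument using the joint continuity of $\oplus$ and the fact that $F\subset U_1$, but it is the one place where the proof of Lemma~\ref{x2} must be slightly adjusted rather than invoked as a black box.
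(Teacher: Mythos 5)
Your proof is correct, and for the nontrivial direction it rests on the same core mechanism as the paper's: produce a $G_{\delta}$ subgyrogroup $P$ contained in the NSS witness and let the NSS property collapse it to $\{0\}$. The execution, however, is genuinely different. The paper uses Lemma~\ref{x2} as a black box and instead normalizes $F$: by homogeneity it assumes $0\in F$ and by regularity it assumes $F\subset U$ (with $U$ the NSS witness), so that the resulting $P$ satisfies $P\subset F\subset U$. You leave $F$ where it is and reopen the inductive construction inside the proof of Lemma~\ref{x2}, adding the extra requirement $V_{1}\subset V$, which gives $P\subset V_{1}\subset V$ directly. Your route has a small advantage: the paper's reduction ``we may assume $F\subset U$'' silently requires the shrunken set to remain a compact $G_{\delta}$-set, which takes a little care, whereas your adjustment of the first step is immediate --- by compactness of $F$ choose a symmetric open $W$ with $F\oplus W\subset U_{1}$ and then pass to a symmetric open neighborhood of $0$ inside $W\cap V$; note that the condition to keep is $F\oplus V_{1}\subset U_{1}$ together with $V_{1}\subset V$ (not $F\oplus V_{1}\subset U_{1}\cap V$, which may be unattainable when $F\not\subset V$), so your parenthetical mention of ``$U_{1}\cap V$'' should be read this way. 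The cost is that Lemma~\ref{x2} cannot be cited verbatim. Incidentally, your variant makes transparent that the hypotheses are stronger than necessary: once $V_{1}\subset V$ is imposed, neither $F$ nor the sets $U_{n}$ play any role in the conclusion, and the bare construction of symmetric open sets with $V_{n+1}\oplus V_{n+1}\subset V_{n}$ and $V_{1}\subset V$ (available by Lemma~\ref{m}) already shows that $\{0\}$ is a $G_{\delta}$-point in every NSS-gyrogroup, so condition~(1) is not actually needed to derive condition~(2).
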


\begin{proof}
It is clear that the second condition implies the first one. Let us show that the first condition implies the second one. Since the space $G$ is homogeneous, we can assume that $0\in F$. By the regularity of $G$, we can also assume that $F\subset U$, where $U$ is an open neighborhood of $0$ such that all subgyrogroups of $G$ contained in $U$ are trivial. By Lemma $\ref{x2}$, there exists a $G_{\delta}$-set $P$ in $G$ which is a subgyrogroup of $G$ such that $0\in P\subset F$. Since $F\subset U$, it follows that $P\subset U$. Therefore, $P=\{0\}$ since $G$ is a $NSS$-gyrogroup. Since $P$ is a $G_{\delta}$-set in the gyrogroup $G$, we are done.
\end{proof}

\begin{theorem}
Every locally compact $NSS$-gyrogroup $(G, \tau, \oplus)$ is first-countable.
\end{theorem}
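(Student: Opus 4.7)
The plan is to first establish that $\{0\}$ is a $G_{\delta}$-set in $G$ using the $NSS$-hypothesis, then leverage local compactness to promote this to countable character at $0$, and finally use the homogeneity of $G$ to extend first-countability to every point.

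For the $G_{\delta}$ step, fix an open neighborhood $V$ of $0$ witnessing the $NSS$-property, so that every subgyrogroup of $G$ contained in $V$ is trivial. Iterating Lemma~\ref{m}(1), I would construct a sequence $\{U_{n}:n\in\omega\}$ of open symmetric neighborhoods of $0$ with $U_{0}\subset V$ and $U_{n+1}\oplus U_{n+1}\subset U_{n}$ for every $n$. The family $\mathscr{U}=\{U_{n}:n\in\omega\}$ then satisfies both hypotheses of Theorem~\ref{ttt} (symmetry yields condition (b)), so $H=\bigcap_{n}U_{n}$ is a closed subgyrogroup of $G$. Since $H\subset U_{0}\subset V$, the $NSS$-assumption forces $H=\{0\}$, presenting $\{0\}$ as a countable intersection of open sets.

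For the promotion step, use local compactness to pick a compact neighborhood $K$ of $0$. Using regularity of $G$ (topological gyrogroups are rectifiable and hence Tychonoff by \cite{CZ}), for each $n$ I would shrink to an open neighborhood $W_{n}$ of $0$ with $\overline{W_{n}}\subset U_{n}\cap\mathrm{int}(K)$; after replacing $W_{n}$ by $W_{0}\cap\cdots\cap W_{n}$ I may also assume $\{W_{n}\}$ is decreasing. Each $\overline{W_{n}}$ is then compact as a closed subset of $K$, and I claim $\{W_{n}:n\in\omega\}$ is a local base at $0$: given any open $U\ni 0$, the sets $\overline{W_{n}}\setminus U$ form a decreasing sequence of compact sets whose overall intersection is contained in $(\bigcap_{n}\overline{W_{n}})\setminus U\subset\{0\}\setminus U=\emptyset$, so by the finite intersection property some $\overline{W_{n}}$ lies inside $U$, and in particular $W_{n}\subset U$. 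Applying Lemma~\ref{n} to translate this countable base to every point then yields first-countability of $G$.

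The main obstacle is the promotion step itself: it needs both the compact neighborhood $K$ (to make the $\overline{W_{n}}$ compact, so the finite intersection property is available) and the regularity of $G$ (to nest each $\overline{W_{n}}$ inside $U_{n}$). The other ingredients---presenting $\{0\}$ as a $G_{\delta}$ via Theorem~\ref{ttt}, and transferring the local base by homogeneity---are direct applications of results already developed in the paper.
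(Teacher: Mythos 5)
Your proposal is correct, and its overall skeleton (show the identity is a $G_{\delta}$-point, use local compactness to promote this to countable character at $0$, finish by homogeneity via Lemma~\ref{n}) is the same as the paper's; the interesting difference is in how the $G_{\delta}$ step is obtained. The paper first notes that a locally compact regular space contains a non-empty compact $G_{\delta}$-set and then applies its Theorem~\ref{x}, whose proof goes through Lemma~\ref{x2} (producing a $G_{\delta}$ subgyrogroup inside that compact set), so local compactness enters twice. You instead derive the $G_{\delta}$ property of $\{0\}$ directly from the $NSS$ hypothesis: iterating Lemma~\ref{m}(1) inside the $NSS$-witness $V$ gives symmetric $U_{n}$ with $U_{n+1}\oplus U_{n+1}\subset U_{n}$, Theorem~\ref{ttt} applies (symmetry handles condition (b)), and the resulting closed subgyrogroup $\bigcap_{n}U_{n}\subset V$ must be trivial. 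This is more elementary, bypasses Lemma~\ref{x2} and Theorem~\ref{x} entirely, and in fact proves the stronger fact that the identity of \emph{every} $NSS$-gyrogroup, locally compact or not, is a $G_{\delta}$-point; what the paper's detour buys is Theorem~\ref{x} itself, an equivalence of independent interest. For the promotion step the paper only asserts that a $G_{\delta}$-point of a locally compact Hausdorff space is a point of first countability, whereas you supply the standard proof (regular shrinking into a compact neighborhood plus the finite-intersection-property argument), which is exactly the fact being used; your appeal to regularity is harmless, since it follows from local compactness and Hausdorffness and is in any case assumed by the paper itself in the proof of Theorem~\ref{x}.
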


\begin{proof}
It is clear that every locally compact regular space contains a non-empty compact $G_{\delta}$-set. Therefore, by Theorem $\ref{x}$, the identity element $0$ of $G$ is a $G_{\delta}$-point in $G$. Since the space $G$ is locally compact and Hausdorff, it follows that $G$ is first-countable at $0$. Hence, by homogeneity, the space $G$ is first-countable.
\end{proof}

Finally, we discuss $P$-gyrogroups, and prove that every Lindel$\ddot{o}$f $P$-gyrogroup $(G, \tau, \oplus)$ is Ra$\breve{\i}$kov complete. Recall that $X$ is a $P$-space if every $G_{\delta}$-set in $X$ is open. Similarly, a $P$-gyrogroup is a topological gyrogroup whose underlying space is a $P$-space. It is well known that every regular $P$-space is zero-dimensional.

\begin{lemma}\label{r}
Suppose that $(G, \tau, \oplus)$ is a $P$-gyrogroup. Then:

\smallskip
(a) $G$ has a base at the identity element $0$ consisting of open subgyrogroups.

\smallskip
(b) If $H$ is a $L$-subgyrogroup of $G$, then the coset space $G/H$ is also a $P$-space.

\smallskip
(c) If $G$ is a dense subgyrogroup of a topological gyrogroup $H$, then $H$ is a $P$-gyrogroup.
\end{lemma}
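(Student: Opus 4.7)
For part (a), the plan is to start with an arbitrary open neighborhood $U$ of $0$ and use Lemma~\ref{m} to build a sequence $\{V_n:n\in\omega\}$ of open symmetric neighborhoods of $0$ with $V_0\subset U$ and $V_{n+1}\oplus V_{n+1}\subset V_n$ for each $n$. The family $\mathscr{V}=\{V_n:n\in\omega\}$ then satisfies the hypotheses of Theorem~\ref{ttt}, so $H=\bigcap_{n\in\omega}V_n$ is a closed subgyrogroup of $G$ contained in $U$. Since $H$ is a countable intersection of open sets and $G$ is a $P$-space, $H$ is also open in $G$, giving the desired open subgyrogroup inside $U$.

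Part (b) is purely topological once one uses the quotient topology. Let $W=\bigcap_{n\in\omega}W_n$ be a $G_\delta$-set in $G/H$ with each $W_n$ open. By the definition of $\tau(G/H)$, each $\varphi^{-1}(W_n)$ is open in $G$, so $\varphi^{-1}(W)=\bigcap_{n\in\omega}\varphi^{-1}(W_n)$ is a $G_\delta$-set in $G$ and hence open in $G$ because $G$ is a $P$-space. Applying the definition of $\tau(G/H)$ once more, $W$ is open in $G/H$.

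Part (c) is the main obstacle. Let $W=\bigcap_{n\in\omega}W_n$ with each $W_n$ open in $H$; we must show $W$ is open in $H$. Using the homeomorphism $L_x$ from Lemma~\ref{n}, it suffices to find, for each $x\in W$, an open neighborhood of $0$ inside $(\ominus x)\oplus W$, so we may assume $0\in W$. Using Lemma~\ref{m}, I will construct a sequence $\{A_n:n\in\omega\}$ of symmetric open neighborhoods of $0$ in $H$ with $A_n\subset W_n$ and $A_{n+1}\oplus A_{n+1}\subset A_n$. Let $K=G\cap\bigcap_{n\in\omega}A_n$. Each $A_n\cap G$ is open in $G$, so $K$ is a $G_\delta$-set in $G$ and therefore open in $G$ since $G$ is a $P$-gyrogroup; by Theorem~\ref{ttt} applied to $H$, $\bigcap_{n\in\omega}A_n$ is a subgyrogroup of $H$, so $K$ is a subgyrogroup of $G$ containing $0$.

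Now choose an open set $O\subset H$ with $O\cap G=K$; then $0\in O$. The density of $G$ in $H$ yields $O\subset\overline{K}^{H}$, because every open neighborhood of a point of $O$ meets $O\cap G=K$. Since $K\subset A_{n+1}$ for every $n$, we get $\overline{K}^{H}\subset\overline{A_{n+1}}^{H}$, and Lemma~\ref{m}(3) gives $\overline{A_{n+1}}^{H}\subset A_{n+1}\oplus A_{n+1}\subset A_n\subset W_n$. Hence $O\subset\overline{K}^{H}\subset\bigcap_{n\in\omega}W_n=W$, exhibiting $O$ as an open neighborhood of $0$ inside $W$. The step that requires the most care is precisely this chain of inclusions, since $\overline{K}^{H}$ is a closed set in $H$ and one must squeeze it inside the open sets $W_n$; the trick is to pass through $\overline{A_{n+1}}^{H}$ and invoke Lemma~\ref{m}(3) to replace closure by the open thickening $A_{n+1}\oplus A_{n+1}$.
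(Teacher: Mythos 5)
Your proposal is correct and follows essentially the same route as the paper: in (a) a nested sequence of symmetric neighborhoods whose intersection is a $G_\delta$ subgyrogroup, in (b) pulling a $G_\delta$-set back through the quotient map, and in (c) intersecting a nested sequence in $H$ with the dense $P$-gyrogroup $G$, extending the resulting open set to $H$, and squeezing its closure inside the $W_n$ via Lemma~\ref{m}(3). Your explicit translation reduction to $0$ in (c) only makes precise a step the paper leaves implicit.
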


\begin{proof}
($a$) It suffices to prove that for each neighborhood $U$ of $0$ there exists an open subgyrogroup $V$ such that $V\subset U$. Indeed, take an arbitrary neighborhood $U$ of the identity element $0$ in $G$. Then there exists a sequence $\{U_{n}:n\in \omega\}$ of open symmetric neighborhoods of $0$ in $G$ such that $U_{0}\subset U$ and $U_{n+1}\oplus U_{n+1}\subset U_{n}$ for each $n\in \omega$. Then $V= \bigcap _{n=0}^{\infty}U_{n}$ is a subgyrogroup of $G$ lying in $U$. Since $G$ is a $P$-gyrogroup, $V$ is open in $G$. Every open subgyrogroup of $G$ is closed by \cite[Proposition 7]{AW}, so $G$ has a base of closed and open sets at the identity element $0$.

(b) By Theorem~\ref{t00000}, the natural homomorphism $\varphi: G\rightarrow G/H$ is a continuous open. Assume that $Q$ is a $G_{\delta}$-set in $G/H$, then $\varphi^{-1}(Q)$ is a $G_{\delta}$-set in the $P$-gyrogroup $G$, so that $\varphi^{-1}(Q)$ is open in $G$. Therefore, $Q=\varphi(\varphi^{-1}(Q))$ is open in $G/H$. This proves that $G/H$ is a $P$-space.

(c) Let $U=\bigcap_{n\in\mathbb{N}}U_{n}$, where $\{U_{n}:n\in \omega\}$ is a sequence of open neighborhoods of the identity element $0$ in $H$. Next we prove that $U$ is open in $H$. Indeed, there exists a sequence $\{V_{n}:n\in \omega\}$ of open symmetric neighborhoods of $H$ such that $V_{n+1}\oplus V_{n+1}\subset V_{n}\subset U_{n}$ for each $n\in \omega$. Then $N=\bigcap_{n=0}^{\infty}V_{n}$ is a closed subgyrogroup of $H$ by Theorem~\ref{ttt}. Since $G$ is a $P$-gyrogroup, $P=N\cap G=\bigcap _{n=0}^{\infty}(V_{n}\cap G)$ is a clopen subgyrogroup of $G$. Therefore, we can take an open set $W$ in $H$ such that $W\cap G=P$. It is clear that $0\in W$, and the density of $G$ in $H$ implies that $cl_{H}W=cl_{H}P\subset N$. Since $N\subset U=\bigcap _{n=0}^{\infty}U_{n}$, we conclude that $U$ contains the open neighborhood $W$ of $0$ in $H$. Therefore, $H$ is a $P$-gyrogroup.
\end{proof}

\begin{lemma}
Let $(G, \tau, \oplus)$ be an $\omega$-narrow $P$-gyrogroup. Then every gyrogroup homomorphic continuous image $K$ of $G$ with $\psi (K)\leq \omega$ is countable.
\end{lemma}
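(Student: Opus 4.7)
The plan is to reduce to the case where the ``kernel'' $H=f^{-1}(0_K)$ of the continuous surjective gyrogroup homomorphism $f\colon G\to K$ is an open subgyrogroup of $G$, and then to invoke $\omega$-narrowness to cover $G$ by countably many left translates of $H$, on each of which $f$ is constant. The conclusion $|K|\leq\omega$ will then be immediate.

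First, I would use the countable pseudocharacter of $K$ to write $\{0_K\}=\bigcap_{n\in\omega}W_n$ for some sequence $\{W_n\}$ of open neighborhoods of $0_K$ in $K$. Then
$$H=f^{-1}(0_K)=\bigcap_{n\in\omega}f^{-1}(W_n)$$
is a $G_{\delta}$-subset of $G$. Since $G$ is a $P$-gyrogroup, every $G_{\delta}$-set is open, so $H$ is open in $G$. The set $H$ is also a subgyrogroup of $G$: for $x,y\in H$ the homomorphism property yields $f(x\oplus y)=f(x)\oplus f(y)=0_K$ and $f(\ominus x)=\ominus f(x)=0_K$, so Proposition~1.4 applies and gives $H\leq G$.

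Second, since $H$ is an open neighborhood of the identity in the $\omega$-narrow gyrogroup $G$, there exists a countable set $A\subseteq G$ with $G=A\oplus H$. For any $a\in A$ and $h\in H$ the homomorphism identity gives
$$f(a\oplus h)=f(a)\oplus f(h)=f(a)\oplus 0_K=f(a),$$
so $f$ is constant on each left coset $a\oplus H$. Therefore $K=f(G)=f(A\oplus H)=f(A)$, which is at most countable.

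The argument is a direct transplant of the corresponding reasoning for topological groups, and I do not anticipate a serious obstacle. The only slightly delicate points are the gyrogroup analogues of ``the kernel is a subgroup'' and ``$f$ is constant on cosets'', both of which fall out at once from the homomorphism axiom $f(x\oplus y)=f(x)\oplus f(y)$ together with $f(0_G)=0_K$; if the ambient definition of $\omega$-narrowness is $G=U\oplus A$ rather than $G=A\oplus U$, the symmetric computation $f(h\oplus a)=f(a)$ works equally well.
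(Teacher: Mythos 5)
Your proof is correct and follows essentially the same route as the paper: the kernel $f^{-1}(0_K)$ is a $G_{\delta}$-set by $\psi(K)\leq\omega$, hence an open subgyrogroup since $G$ is a $P$-gyrogroup, and $\omega$-narrowness covers $G$ by countably many translates of it. The only difference is the finish: the paper concludes via the first isomorphism theorem for gyrogroups (identifying $K$ algebraically with the countable quotient $G/N$), whereas you observe directly that $f(a\oplus h)=f(a)\oplus f(h)=f(a)$, so $K=f(A)$ is countable -- a slightly more elementary ending that avoids the quotient structure altogether.
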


\begin{proof}
Consider a continuous homomorphism $\pi :G\rightarrow K$ onto a gyrogroup $K$ with countable pseudocharacter. Since $G$ is a $P$-gyrogroup, the kernel $N$ of $\pi$ is an open subgyrogroup of $G$. By assumption, the gyrogroup $G$ is $\omega$-narrow, so it can be covered by countably many gyrotranslations of $N$. Therefore, the quotient gyrogroup $G/N$ is countable. Finally, it follows from \cite[Theorem 28]{ST} that the gyrogroups $K$ and $G/N$ are algebraically isomorphic, and it follows that $|K|\leq \omega$.
\end{proof}

\begin{lemma}\cite[Lemma 4.4.3]{AA}\label{q}
A Lindel\"{o}f subset $Y$ of a Hausdorff $P$-space $X$ is closed in $X$.
\end{lemma}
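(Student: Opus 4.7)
The plan is to show that the complement $X\setminus Y$ is open in $X$, which by definition makes $Y$ closed. Fix an arbitrary point $x\in X\setminus Y$; the goal is to produce an open neighborhood of $x$ disjoint from $Y$. Everything will come from combining three ingredients: the Hausdorff separation property (to separate $x$ from each individual $y\in Y$), the Lindelöf property of $Y$ (to reduce an uncountable cover to a countable one), and the $P$-space property (to promote a countable intersection of open sets to an open set).

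First, I would use Hausdorffness: for each $y\in Y$, choose disjoint open sets $U_{y}\ni x$ and $V_{y}\ni y$. The family $\{V_{y}:y\in Y\}$ is an open cover of $Y$ (in the subspace, or equivalently by intersecting with $Y$). Since $Y$ is Lindelöf, I can extract a countable subcover $\{V_{y_{n}}:n\in\omega\}$ of $Y$. This is the key step where Lindelöfness is used, and it is the only place we need the cardinality restriction; no regularity or normality is required.

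Next I would form the countable intersection $U=\bigcap_{n\in\omega}U_{y_{n}}$. By construction $x\in U$, and since $X$ is a $P$-space, $U$ is open in $X$. It remains to verify $U\cap Y=\emptyset$: any $z\in Y$ lies in some $V_{y_{n}}$, but $V_{y_{n}}\cap U_{y_{n}}=\emptyset$, so $z\notin U_{y_{n}}$ and therefore $z\notin U$. Thus $U$ is an open neighborhood of $x$ contained in $X\setminus Y$, and since $x$ was arbitrary, $X\setminus Y$ is open.

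I do not anticipate a serious obstacle; the only subtlety is making sure the three properties are invoked in the correct order (Hausdorff first to build the separating pairs, Lindelöf next to make the cover countable, $P$-space last to turn the countable intersection into an open set). If one tried to drop either the $P$-space assumption or the Lindelöf assumption, the argument would fail at exactly the step where that hypothesis is used, which confirms that the three ingredients are all essential.
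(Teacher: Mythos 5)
Your argument is correct and is precisely the standard proof of this fact (the paper itself gives no proof, citing \cite[Lemma 4.4.3]{AA}, where essentially the same Hausdorff--Lindel\"{o}f--$P$-space argument appears): separate $x$ from each $y\in Y$ by disjoint open sets, extract a countable subcover of $Y$, and use the $P$-space property to make the corresponding countable intersection of neighborhoods of $x$ an open set missing $Y$. No gaps.
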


\begin{corollary}
Let $\pi: G\rightarrow H$ be a continuous onto gyrogroup homomorphism of Lindel\"{o}f $P$-gyrogroups. Then $\pi$ is open.
\end{corollary}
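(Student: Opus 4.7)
The plan is to prove openness of $\pi$ at the identity $0$ and then transport this property to every point via left gyrotranslations. For the homogeneity step, given an open $O\subset G$ and $g\in O$, I will use the identity $x\oplus((\ominus x)\oplus z)=z$ (a direct consequence of Lemma~\ref{a}(1) applied to $\ominus x$) to observe that $g\oplus((\ominus g)\oplus O)=O$. Thus $(\ominus g)\oplus O$ is an open neighborhood of $0$ in $G$, and once we know $\pi$ sends it to a neighborhood of $0$ in $H$, left gyrotranslation by $\pi(g)$ (a homeomorphism of $H$ by Lemma~\ref{n}) produces a neighborhood of $\pi(g)$ contained in $\pi(g\oplus((\ominus g)\oplus O))=\pi(O)$. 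Hence it suffices to show that for every open $U\ni 0$ in $G$, $\pi(U)$ is a neighborhood of $0$ in $H$.

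Fix such a $U$ and, using Lemma~\ref{r}(a), choose an open subgyrogroup $V$ of $G$ with $V\subset U$. The key move is to replace $V$ by its saturation $S:=\pi^{-1}(\pi(V))$. Since $\pi$ is a gyrogroup homomorphism, $\pi(V)$ is a subgyrogroup of $H$ (closed under $\oplus$ and $\ominus$), and it follows immediately that $S$ is a subgyrogroup of $G$ containing $V$. Because every $s\in S$ has the open neighborhood $s\oplus V\subset S$ (using $V\subset S$ and closure of $S$ under $\oplus$), $S$ is open, hence clopen by \cite[Proposition 7]{AW}.

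Now $G\setminus S$ is closed in the Lindel\"{o}f space $G$, hence Lindel\"{o}f, so $\pi(G\setminus S)$ is a Lindel\"{o}f subset of $H$. Surjectivity of $\pi$ gives $\pi(S)=\pi(\pi^{-1}(\pi(V)))=\pi(V)$, and the definition of $S$ yields $g\notin S$ if and only if $\pi(g)\notin\pi(V)$; together these show $\pi(G\setminus S)=H\setminus\pi(V)$. Since $H$ is a Hausdorff $P$-space, Lemma~\ref{q} forces $H\setminus\pi(V)$ to be closed in $H$, so $\pi(V)$ is open in $H$. As $0\in\pi(V)\subset\pi(U)$, we conclude that $\pi(U)$ is a neighborhood of $0$, as required.

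The main obstacle I anticipate is that, unlike in the topological group setting, a generic open subgyrogroup $V$ need not be an $L$-subgyrogroup, so the left cosets $g\oplus V$ do not partition $G$ and $H\setminus\pi(V)$ cannot be written as a disjoint union of translates of $\pi(V)$. The standard group-theoretic argument (expressing $H\setminus\pi(V)$ as an $F_{\sigma}$ union of cosets and invoking the $P$-space property) therefore does not port over verbatim. The saturation trick $S=\pi^{-1}(\pi(V))$ circumvents this: it upgrades $V$ to a clopen subgyrogroup whose complement maps onto the whole of $H\setminus\pi(V)$ in one shot, so a single application of Lemma~\ref{q} suffices to close the argument.
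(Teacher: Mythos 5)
Your proof is correct, but it takes a genuinely different route from the paper's. The paper argues globally: by Lemma \ref{q}, the image of every closed (hence Lindel\"{o}f) subset of $G$ is a Lindel\"{o}f, hence closed, subset of the Hausdorff $P$-space $H$, so $\pi$ is a closed and therefore quotient mapping; then for an arbitrary open $U\subset G$ the kernel $K$ of $\pi$ satisfies $\pi^{-1}(\pi(U))=U\oplus K$, which is open, so $\pi(U)$ is open. You instead reduce to neighborhoods of the identity via gyrotranslations, invoke the $P$-gyrogroup property of $G$ through Lemma \ref{r}(a) to shrink $U$ to an open subgyrogroup $V$, and apply Lemma \ref{q} just once, to the single closed set $G\setminus S$ where $S=\pi^{-1}(\pi(V))$, using $\pi(G\setminus S)=H\setminus\pi(V)$. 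Both arguments hinge on Lemma \ref{q} combined with a saturation trick, but yours avoids the kernel and the closed/quotient-map formalism, while the paper's never uses that $G$ itself is a $P$-space, only that $G$ is Lindel\"{o}f and $H$ is a Hausdorff $P$-gyrogroup, so it is slightly more economical in hypotheses. Note also that the obstacle you anticipate does not actually arise: the identity $\pi^{-1}(\pi(U))=U\oplus K$ needs no coset partition or $L$-subgyrogroup, only the left cancellation $u\oplus((\ominus u)\oplus x)=x$ (the same identity you use in your homogeneity step), since $\pi(x)=\pi(u)$ gives $\ominus u\oplus x\in K$ and $x=u\oplus(\ominus u\oplus x)$. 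Your individual steps all check out, including the tacit facts $\pi(0)=0$ and $\pi(\ominus x)=\ominus\pi(x)$, which follow from left cancellation, so $\pi(V)$ is indeed a subgyrogroup of $H$ and $S$ an open subgyrogroup of $G$.
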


\begin{proof}
By Lemma \ref{q}, the homomorphism $\pi$ is closed and, hence, quotient. Let $U$ be an open subset of $G$. Then $\pi ^{-1}\{\pi (U)\}=U\oplus K$ is open in $G$, where $K$ is the kernel of $\pi$. Since $\pi$ is a quotient mapping, the set $\pi (U)$ has to be open in $H$. So $\pi$ is an open homomorphism.
\end{proof}

\begin{theorem}
Every Lindel\"{o}f $P$-gyrogroup $(G, \tau, \oplus)$ is Ra$\check{\imath}$kov complete.
\end{theorem}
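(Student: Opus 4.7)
The plan is to embed $G$ as a dense subgyrogroup of its Ra$\check{\imath}$kov completion and then use the two preceding lemmas to show that this embedding is surjective. Concretely, I would let $H = \varrho G$ denote the Ra$\check{\imath}$kov completion of $G$, so that $G$ sits as a dense subgyrogroup of the Hausdorff topological gyrogroup $H$. This requires knowing that the standard two-sided uniformity of a Hausdorff topological gyrogroup admits a Hausdorff completion which is again a topological gyrogroup containing $G$ as a dense subgyrogroup; this is implicit in the very notion of Ra$\check{\imath}$kov completeness as used in the statement.

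With the completion in hand, I would first invoke Lemma~\ref{r}(c) to propagate the $P$-property from $G$ to $H$: since $G$ is a dense $P$-subgyrogroup of $H$, the ambient gyrogroup $H$ is itself a $P$-gyrogroup, and in particular its underlying space is a Hausdorff $P$-space. Then I would apply Lemma~\ref{q} to the pair $G \subset H$: the Lindel\"of subset $G$ of the Hausdorff $P$-space $H$ is necessarily closed in $H$. Combining this closedness with the density of $G$ in $H$ gives $G = H$, which is precisely the statement that $G$ coincides with its Ra$\check{\imath}$kov completion, i.e., that $G$ is Ra$\check{\imath}$kov complete.

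The main (and really only) obstacle I expect is the opening step: one has to verify that the Ra$\check{\imath}$kov completion, well understood for topological groups, transfers to topological gyrogroups in such a way that $G$ embeds as a dense subgyrogroup of a Hausdorff topological gyrogroup. Once that foundational point is granted, the remainder of the argument is a purely formal two-line combination of Lemma~\ref{r}(c) with Lemma~\ref{q} and requires no calculation. It is also worth noting that Hausdorffness of $H$ is essential in applying Lemma~\ref{q}, but this is automatic from the $T_{1}$ convention adopted throughout the paper together with the continuity of the gyrogroup operations on $H$.
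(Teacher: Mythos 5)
Your proposal is correct and takes essentially the same route as the paper: both proofs are exactly the two-line combination of Lemma~\ref{r}(c) (the $P$-property passes to a gyrogroup containing $G$ densely) with Lemma~\ref{q} (a Lindel\"{o}f subset of a Hausdorff $P$-space is closed). The only difference is one of framing: the paper takes an \emph{arbitrary} topological gyrogroup $H$ containing $G$ as a dense subgyrogroup and concludes $G=H$, rather than passing through the Ra$\check{\imath}$kov completion $\varrho G$ explicitly, which sidesteps the foundational issue you flag about whether the completion of a topological gyrogroup exists and is again a gyrogroup.
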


\begin{proof}
Suppose that $G$ is a dense subgyrogroup of a topological gyrogroup $H$. By (c) of Lemma \ref{r}, $H$ is a $P$-gyrogroup. Then Lemma \ref{q} implies that $G$ is closed in $H$, whence it follows that $G=H$. Therefore, the gyrogroup $G$ is Ra$\check{\imath}$kov complete.
\end{proof}

{\bf Acknowledgements}. The authors are thankful to the
anonymous referees for valuable remarks and corrections and all other sort of help related to the content of this article.

\end{document}